\newcommand{\mathprog}[1]{foo}
\newcommand{\arxiv}[1]{ba}
\newcommand{\nips}[1]{bar}
\renewcommand{\mathprog}[1]{#1}%
\renewcommand{\arxiv}[1]{}%
\renewcommand{\nips}[1]{}%
\renewcommand{\mathprog}[1]{}%
\renewcommand{\arxiv}[1]{}%
\renewcommand{\nips}[1]{#1}%
\renewcommand{\mathprog}[1]{}%
\renewcommand{\arxiv}[1]{#1}%
\renewcommand{\nips}[1]{}%
\theoremstyle{plain}
\newtheorem{theorem}{Theorem}
\newtheorem*{theorem*}{Theorem}
\newtheorem{lemma}{Lemma}
\newtheorem*{lemma*}{Lemma}
\newtheorem{corollary}[theorem]{Corollary}
\title{Analysis of Krylov Subspace Solutions of 
Regularized Nonconvex Quadratic Problems}
\author{
	Yair Carmon\\
	Department of Electrical Engineering\\
	Stanford University\\
	\texttt{yairc@stanford.edu} \\
	\And
	John C.\ Duchi\\
	Departments of Statitstics and Electrical Engineering\\
	Stanford University\\
	\texttt{jduchi@stanford.edu} \\
}
	\title{Analysis of Krylov Subspace Solutions \\ of 
		Regularized Nonconvex Quadratic Problems}
	\author{Yair Carmon ~~~ John C.\ Duchi\\
		\texttt{\{\href{mailto:yairc@stanford.edu}{yairc},%
			\href{mailto:jduchi@stanford.edu}{jduchi}\}@stanford.edu}}
	\date{}
\begin{document}
\maketitle

\begin{abstract}
We provide convergence rates for Krylov subspace solutions
to the trust-region and cubic-regularized (nonconvex) quadratic problems. 
Such solutions may be efficiently computed by the Lanczos method and 
have long been used in practice. We prove error bounds of the 
form $1/t^2$ and $e^{-4t/\sqrt{\kappa}}$, where $\kappa$ is a  
condition number for the problem, and $t$ is the Krylov subspace order 
(number of Lanczos iterations). We also
provide lower bounds showing that our analysis is sharp.
\end{abstract}

\section{Introduction}

Consider the potentially nonconvex quadratic function
\begin{equation*}
  \f(x) \defeq \half x^T A x + b^T x,
\end{equation*}
where $A \in \R^{d \times d}$ and $b \in \R^d$.
We wish to solve regularized minimization 
problems of the form
\begin{equation}\label{eq:problems}
  \minimize_{x}
  \f(x) ~ \subjectto \norm{x} \le R
  ~~~\mbox{and}~~~
  \minimize_{x} \f(x) + \frac{\rho}{3} \norm{x}^3,
\end{equation}
where $R$ and $\rho \ge 0$ are regularization parameters.  These problems
arise primarily in the family of trust-region and cubic-regularized Newton
methods for general nonlinear optimization
problems~\cite{ConnGoTo00,NesterovPo06, Griewank81,CartisGoTo11},
which optimize a smooth function $g$ by sequentially
minimizing local models of the form
\begin{equation*}
  g(x_i + \Delta)
  \approx 
  g(x_i) + \grad g(x_i)^T \Delta + \half \Delta^T \hess g(x_i) 
  \Delta = g(x_i) + \f[\hess g(x_i), \grad g(x_i)](\Delta),
\end{equation*}
where $x_i$ is the current iterate and $\Delta \in \R^d$ is the search
direction. Such models tend to be unreliable for large $\norm{\Delta}$,
particularly when $\hess g(x_i)\nsucc 0$.  Trust-region and cubic
regularization methods address this by constraining and regularizing the
direction $\Delta$, respectively. 

Both classes of methods and their 
associated subproblems are the
subject of substantial ongoing 
research~\cite{HazanKo16,Ho-NguyenKi16,CarmonDu16,AgarwalAlBuHaMa17,LendersKiPo18}.
 In 
the machine learning community, there is growing interest in using 
these methods for minimizing (often nonconvex) training losses, handling 
the large finite-sum structure of learning problems  by means of 
sub-sampling~\cite{ReigerJoMcAu17,KohlerLu17,BlanchetCaMeSc16,
	YaoXuRoMa18,TripuraneniStJiReJo17}.

The problems~\eqref{eq:problems} are challenging to solve in
high-dimensional settings, where direct decomposition (or
even storage) of the matrix $A$ is infeasible. In some scenarios, however,
computing matrix-vector products $v\mapsto Av$ is feasible. Such is the case
when $A$ is the Hessian of a neural network, where $d$ may be in the
millions and $A$ is dense, and yet we can compute Hessian-vector products
efficiently on batches of training
data~\cite{Pearlmutter94,Schraudolph02}. 

In this paper we consider a scalable approach for approximately 
solving~\eqref{eq:problems}, which consists of minimizing the objective 
in the \emph{Krylov subspace} of order $t$, 
\begin{equation}\label{eq:krylov-def}
  \mc{K}_t(A,b) \defeq 
  \mathrm{span}\{b, Ab, \ldots, A^{t-1}b\}.
\end{equation}
This requires only $t$ matrix-vector products,
and the Lanczos method allows one to efficiently find the solution to
problems~\eqref{eq:problems} over $\mc{K}_t(A,b)$ (see,
e.g.~\cite[Sec.~2]{GouldLuRoTo99,CartisGoTo11}).  Krylov subspace 
methods are familiar in numerous large-scale numerical problems, 
including
conjugate gradient methods, eigenvector problems, or solving linear
systems~\cite{HestenesSt52,
  Nemirovski94,TrefethenBa97,GolubVa89}. 

It is well-known that, with exact arithmetic, the order $d$ subspace
$\Krylov[d]$ generically contains the global solutions
to~\eqref{eq:problems}. However, until recently the literature contained no
guarantees on the rate at which the suboptimality of the solution approaches
zero as the subspace dimension $t$ grows. This is in contrast to the two
predominant Krylov subspace method use-cases---convex quadratic
optimization~\cite{GolubVa89,NemirovskiYu83,Nesterov04} and eigenvector
finding~\cite{KuczynskiWo92}---where such rates of convergence have 
been
known for decades.  \citet{ZhangShLi17} make substantial
progress on this gap,
establishing bounds implying a linear rate of convergence for the 
trust-region variant of problem~\eqref{eq:problems}.

In this work we complete the picture, proving that the optimality gap of the 
order $t$ Krylov subspace solution to either of the 
problems~\eqref{eq:problems}  is bounded by both $e^{-4t/\sqrt{\kappa}}$ 
and $t^{-2}\log^2(\norms{b}/|u_{\min}^T b|)$. 
Here $\kappa$ is a condition number for the problem that naturally 
generalizes the classical condition number of the matrix $A$, and 
$u_{\min}$ is an eigenvector of $A$ corresponding to its smallest 
eigenvalue. Using randomization, we may replace $|u_{\min}^T b|$ with a 
term proportional to $1/\sqrt{d}$, circumventing the well-known ``hard 
case'' of the problem~\eqref{eq:problems} (see 
Section~\ref{sec:upper-rand}).
Our analysis both leverages and 
unifies the known results for convex quadratic and eigenvector problems, 
which constitute special cases of~\eqref{eq:problems}. 

%
%
%
%
%
%
%
%
%
%
%
%
%
%
%
%
%
%
%
%
%
%
%
%
%
%
%
%
%
%
%
%
%
%

\paragraph{Related work}

\citet{ZhangShLi17} show that the error of certain polynomial 
approximation problems bounds the suboptimality of Krylov subspace 
solutions to the trust region-variant of the problems~\eqref{eq:problems}, 
implying convergence at a rate exponential in $-t/\sqrt{\kappa}$. Based on 
these bounds, the authors propose novel stopping criteria for subproblem 
solutions in the trust-region optimization method, showing good empirical 
results. However, the bounds
of~\cite{ZhangShLi17} become weak for large $\kappa$ and vacuous in the
hard case where $\kappa=\infty$.

Prior works develop algorithms for solving~\eqref{eq:problems} 
with convergence guarantees that hold in the hard case. 
\citet{HazanKo16}, \citet{Ho-NguyenKi16},
and \citet{AgarwalAlBuHaMa17} propose algorithms that obtain error  
roughly $t^{-2}$ after computing $t$ matrix-vector products.
The different algorithms these papers propose all essentially reduce the
problems~\eqref{eq:problems} to a sequence of eigenvector and convex
quadratic problems to which standard algorithms
apply. In previous work~\cite{CarmonDu16}, we analyze gradient 
descent---a direct, local
method---for the cubic-regularized problem. There, we show a rate of 
convergence
roughly $t^{-1}$, reflecting the well-known complexity gap between gradient
descent (respectively, the power method) and conjugate gradient
(respectively, Lanczos) methods~\cite{TrefethenBa97,GolubVa89}.

Our development differs from this prior work in the following ways.
\begin{enumerate}[leftmargin=*]
\item We analyze a practical approach, implemented in efficient optimization
  libraries~\cite{GouldOrTo03,LendersKiPo18}, with essentially no tuning
  parameters. Previous algorithms~\cite{HazanKo16,Ho-NguyenKi16,
    AgarwalAlBuHaMa17} are convenient for theoretical analysis but less
  conducive to efficient implementation; each has several parameters that
  require tuning, and we are unaware of numerical experiments with any of
  the approaches.

\item We provide both linear ($e^{-4t/\sqrt{\kappa}})$ and sublinear
  ($t^{-2}$) convergence guarantees. In contrast, the
  papers~\cite{HazanKo16,Ho-NguyenKi16,AgarwalAlBuHaMa17} provide only a
  sublinear rate; \citet{ZhangShLi17} provide only the linear rate.

\item Our analysis applies to both the trust-region and cubic regularization 
  variants in~\eqref{eq:problems}, 
  while~\cite{HazanKo16,Ho-NguyenKi16,ZhangShLi17} consider only the  
  trust-region problem, and \cite{ZhangShLi17,CarmonDu16} consider only 
  cubic regularization.

\item We provide lower bounds---for adversarially constructed problem
  instances---showing our convergence guarantees are tight to within
  numerical constants. By a resisting oracle
  argument~\cite{NemirovskiYu83}, these bounds apply to any
  deterministic algorithm that accesses $A$
  via matrix-vector products.
  
\item Our arguments are simple and transparent, and we
  leverage established  
  results on convex optimization and the eigenvector problem to give 
  short proofs of our main results.
\end{enumerate}

\paragraph{Paper organization} 
In Section~\ref{sec:upper} we state and prove our convergence rate 
guarantees for the trust-region problem. 
Then, in Section~\ref{sec:upper-cr} we quickly transfer those results to the 
cubic-regularized problem by showing that it always has a smaller 
optimality gap. Section~\ref{sec:lower} gives our lower bounds, stated for 
cubic regularization but immediately applicable to the trust-region 
problem by the same optimality gap bound. Finally, in Section~\ref{sec:exp} 
we illustrate our analysis with some numerical experiments.

\newcommand{\lmin}{\lambda_{\min}}
\newcommand{\lmax}{\lambda_{\max}}

\paragraph{Notation}
For a symmetric matrix $A\in\R^{d\times d}$ and vector $b$ we let 
$\f(x) \defeq \half x^T A x + b^T x.$
We let $\lmin(A)$ and $\lmax(A)$ denote the minimum and maximum 
eigenvalues of $A$, and let $u_{\min}(A), u_{\max}(A)$ denote their 
corresponding (unit) eigenvectors, dropping the argument $A$ when clear 
from context. For integer $t\ge1$ we let
$\Polys[t] \defeq \left\{  c_0 + c_1 x + \cdots + c_{t-1}x^{t-1}\mid 
c_i\in \R \right\}$ 
be the polynomials of degree at most $t-1$, so that the Krylov 
subspace~\eqref{eq:krylov-def} is $\Krylov = \left\{ p(A)b \mid 
p\in\Polys[t] \right\}$.
We 
use $\norm{\cdot}$ to denote Euclidean norm on $\R^d$ and 
$\ell_2$-operator norm on $\R^{d\times d}$. Finally, we denote 
$(z)_+\defeq\max\{z,0\}$ and $(z)_- \defeq \min\{z,0\}$.
\section{The trust-region problem}\label{sec:upper}

Fixing a symmetric matrix $A\in \R^{d\times d}$, vector $b\in\R^d$ and 
trust-region radius $R>0$, we let
\begin{equation*}
\str \in \argmin_{x\in\R^d,~\norm{x}\le R} \f(x) = \half x^T A x + 
b^T x
\end{equation*}
denote a solution (global minimizer) of the trust region problem. Letting 
$\lmin,\lmax$ denote the extremal eigenvalues of $A$, $\str$ 
admits the following characterization~\cite[Ch.~7]{ConnGoTo00}:
$\str$ solves problem~\eqref{eq:problems} if and only if
there exists $\ltr$ such that
\begin{equation}
  (A + \ltr I) \str = -b,
  ~~~ \ltr \ge (-\lmin)_+,
  ~~~ \mbox{and} ~~~
  \ltr(R - \norm{\str}) = 0.
  \label{eq:tr-optimality}
\end{equation}
The optimal Lagrange multiplier $\ltr$ always exists and is unique,
and if $\ltr > -\lmin$ the solution $\str$ is unique and satisfies 
$\str = -(A+\ltr I )^{-1}b$. Letting $u_{\min}$ denote the eigenvector of 
$A$ corresponding to $\lmin$, the
characterization~\eqref{eq:tr-optimality}
shows that $u_{\min}^T b \ne 0$ implies 
$\ltr > -\lmin$.

Now, consider the Krylov subspace solutions, and for $t>0$, let
\begin{equation*}
  \itertr_t \in \argmin_{x\in\Krylov,~\norm{x}\le R} 
  \f(x) = \half x^T A x + b^T x
\end{equation*}
denote a minimizer of the trust-region problem in the Krylov subspace of
order $t$ . \citet{GouldLuRoTo99} show how to compute the
Krylov subspace solution $\itertr_t$ in time dominated by the cost of
computing $t$ matrix-vector products using the Lanczos method
(see also Section~\ref{sec:lanczos} of the supplement).

\subsection{Main result}
With the notation established above, our main result follows.
\begin{restatable}{theorem}{thmMain}\label{thm:tr}
  For every $t > 0$,
  \begin{equation}\label{eq:tr-lin-time}
    \f(\itertr_t) - \f(\str) \le 36\left[\f(0) - \f(\str) \right]
    \exp\left\{
    -4t\sqrt{\frac{\lmin + \ltr}{\lmax +\ltr}}
    \right\},
  \end{equation}
  and
  \begin{equation}\label{eq:tr-sublin-time}
    \f(\itertr_t) - \f(\str) \le \frac{(\lmax - 
      \lmin)\norm{\str}^2}{(t-\half)^2} 
    \left[ 
      4 + \frac{\I_{\{\lmin < 0\}}}{8}
      \log^2\left(\frac{4\norm{b}^2}{(u_{\min}^T b)^2}\right)
      \right].
  \end{equation}
\end{restatable}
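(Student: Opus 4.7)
The plan is to reduce the trust-region problem to a convex quadratic by a Lagrangian shift, then construct a good Krylov-subspace vector via Chebyshev polynomial approximation. Define $A_\ltr \defeq A + \ltr I$, which is positive semidefinite by~\eqref{eq:tr-optimality}, and set $\hat f(x) \defeq \half x^T A_\ltr x + b^T x = f(x) + \half \ltr \norm{x}^2$. Then $\hat f$ is convex and, since $A_\ltr \str = -b$, $\str$ minimizes it. For any $y$,
\begin{equation*}
f(y) - f(\str) = [\hat f(y) - \hat f(\str)] + \half \ltr [\norm{\str}^2 - \norm{y}^2].
\end{equation*}
From $b^T \str = -\str^T A_\ltr \str \le 0$ I also get $f(0) - f(\str) \ge \half \ltr R^2$ and $\hat f(0) - \hat f(\str) \le f(0) - f(\str)$, so both summands above can be absorbed into a constant multiple of $f(0)-f(\str)$.

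For the linear rate~\eqref{eq:tr-lin-time}, I pick $\phi \in \Polys[t+1]$ with $\phi(0)=1$ that is non-negative on $\mrm{spec}(A_\ltr)$---concretely, the square of a suitably shifted Chebyshev polynomial on $[\lmin+\ltr,\lmax+\ltr]$, renormalized so that $\phi(0)=1$. Set $y \defeq (I - \phi(A_\ltr))\str = -q(A_\ltr) b$ with $q(z)=(1-\phi(z))/z \in \Polys[t]$; then $y \in \Krylov$, and $0 \le \phi \le 1$ on the spectrum ensures $\norm y \le R$ so that $y$ is feasible. Diagonalizing $A_\ltr v_i = \mu_i v_i$ with $\str = \sum_i \alpha_i v_i$,
\begin{equation*}
\hat f(y) - \hat f(\str) = \half \sum_i \mu_i \alpha_i^2 \phi(\mu_i)^2, \qquad \norm{\str}^2 - \norm y^2 = \sum_i \alpha_i^2 (2\phi(\mu_i)-\phi(\mu_i)^2).
\end{equation*}
Standard Chebyshev estimates bound $\max_i \phi(\mu_i)$ exponentially in $t/\sqrt\kappa$, where $\kappa=(\lmax+\ltr)/(\lmin+\ltr)$; the $\hat f$-error enters quadratically in $\phi$ and the norm-difference linearly, and combining with the first paragraph yields the claimed exponential rate with constant~$36$.

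For the sublinear rate~\eqref{eq:tr-sublin-time}, I keep the same framework but change the polynomial to handle a possibly tiny $\lmin+\ltr$. If $\lmin \ge 0$, a shifted-Chebyshev construction on $[0,\lmax-\lmin]$ gives the $(\lmax-\lmin)R^2/(t-\half)^2$ bound with no log factor. In the hard case $\lmin<0$, $A_\ltr$ may be (nearly) singular, so I take $\phi$ as a product of two polynomials---a Chebyshev approximation of $0$ on a bulk interval $[\eta,\lmax-\lmin]$ times a factor controlling the near-zero gap $[0,\eta]$---and optimize over the threshold $\eta$. The leakage of $b$ into the bottom eigenspace is controlled by $|u_{\min}^T b|/\norm b$, and the balance between bulk and tail contributions produces the $\log^2(\norm b^2/(u_{\min}^T b)^2)$ factor, in the spirit of Kuczynski--Wozniakowski-style Lanczos eigenvector analyses.

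The main obstacle is the sublinear bound in the hard case: simultaneously controlling the convex-quadratic residual on the bulk of the spectrum and the norm-difference contribution from eigenvalues near $\lmin$ requires the product-polynomial construction, with the constant in front of the $\log^2$ factor kept independent of $|u_{\min}^T b|$. The squared logarithm reflects that polynomial approximation of $1/x$ on $[\eta,\lmax-\lmin]$ needs degree scaling like $\log(1/\eta)/\sqrt\eta$, and that optimizing over $\eta$ interacts multiplicatively with the leakage bound.
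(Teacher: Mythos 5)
Your overall decomposition---shifting by a multiple of the identity and splitting the gap into a ``convex error'' plus a ``norm error'' $\frac{\lambda}{2}(\norm{\str}^2-\norm{x}^2)$---is exactly the paper's starting point. But your linear-rate construction has a genuine quantitative gap. You force $0\le\phi\le1$ on the spectrum so that $y=(I-\phi(\Atr))\str$ is automatically feasible, which (since $\phi(0)=1$ and $\phi\ge0$) forces $\phi$ to be the square of a polynomial of degree about $t/2$; the best achievable value of $\max_{\mu\in[\lmin+\ltr,\lmax+\ltr]}\phi(\mu)$ is then of order $e^{-2t/\sqrt{\kappa}}$, not $e^{-4t/\sqrt{\kappa}}$. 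Since, as you note yourself, the norm error $\frac{\ltr}{2}\sum_i\alpha_i^2\left(2\phi(\mu_i)-\phi(\mu_i)^2\right)$ is \emph{first} order in $\phi$, your bound is $O(e^{-2t/\sqrt{\kappa}})$ whenever $\ltr>0$, which does not give~\eqref{eq:tr-lin-time} (and the paper's matching lower bound shows the $4$ in the exponent is essentially the right constant, so this is not cosmetic). The missing idea is to use the full degree-$t$ Chebyshev approximation, allowing $1-\mu p(\mu)$ to change sign so that $\norm{y_t}$ may slightly exceed $\norm{\str}$, and then rescale $x_t=(1-\alpha)y_t$ with $\alpha=(\norm{y_t}-\norm{\str})/\max\{\norm{\str},\norm{y_t}\}$: this keeps $x_t$ feasible and makes the norm deficit $\norm{\str}-\norm{x_t}=\norm{\str}\alpha^2$ \emph{second} order in the approximation error $|\alpha|\le 2e^{-2t/\sqrt{\kappa}}$, which is where $e^{-4t/\sqrt{\kappa}}$ comes from. (A uniform shrinking of your $y$ does not help; the deficit stays first order.)

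The sublinear part is also not yet a proof. For $\lmin\ge0$ the bound you need is the accelerated rate for the \emph{ball-constrained} convex quadratic---the constraint is generally active for the shifted problem---and this does not follow from a shifted-Chebyshev construction aimed at an unconstrained minimizer; the paper obtains it from Tseng's analysis of projected accelerated gradient descent, observing that its iterates stay in $\Krylov$ because projection onto a ball is a scaling. For $\lmin<0$, rather than a single product polynomial optimized over a threshold $\eta$, the paper shifts by $-\lmin$, takes $y_t$ from the constrained accelerated method together with a \emph{normalized} approximate eigenvector $z_t\in\Krylov$ (Kuczynski--Wozniakowski), and sets $x_t=y_t+\alpha z_t$ with $\alpha$ chosen so that $\norm{x_t}=\norm{\str}$ and $\alpha\, z_t^T\grad\f[A_0,b](y_t)\le0$, where $A_0=A-\lmin I$; the norm error then vanishes exactly and the convex error increases only by $\frac{\alpha^2}{2}z_t^TA_0z_t$ with $|\alpha|\le2\norm{\str}$. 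This additive two-ingredient construction is what produces the stated $4+\frac{1}{8}\log^2(\cdot)$ form; your sketch does not show how the bulk/tail balancing over $\eta$ would yield a constant independent of $|u_{\min}^Tb|$ in front of the $\log^2$ term, which is the crux of the near-hard-case bound.
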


Theorem~\ref{thm:tr} characterizes two convergence regimes: 
linear~\eqref{eq:tr-lin-time} and sublinear~\eqref{eq:tr-sublin-time}. 
Linear convergence occurs when $t \gtrsim \sqrt{k}$, where $\kappa = 
\frac{\lmax + \ltr}{\lmin + \ltr} \ge 1$ is the condition 
number for the problem. There, the error 
decays exponentially and falls beneath $\epsilon$ in roughly 
$\sqrt{\kappa}\log{\frac{1}{\epsilon}}$ Lanczos iteration. Sublinear 
convergence occurs when $t \lesssim \sqrt{k}$, and there the error decays 
polynomially and falls beneath $\epsilon$ in roughly 
$\frac{1}{\sqrt{\epsilon}}$ iterations. For worst-case problem instances 
this characterization is tight to constant factors, as we show in 
Section~\ref{sec:lower}.

The guarantees of Theorem~\ref{thm:tr} closely resemble the well-known
guarantees for the conjugate gradient method~\cite{TrefethenBa97},
including them as the special case $R = \infty$ and $\lmin \ge 0$. For convex
problems, the radius constraint $\norm{x}\le R$ always improves the
conditioning of the problem, as $\frac{\lmax}{\lmin} \ge
\frac{\lmax+\ltr}{\lmin+\ltr}$; the smaller $R$ is, the better conditioned
the problem becomes. For non-convex problems, the sublinear rate features an
additional logarithmic term that captures the role of the eigenvector
$u_{\min}$. The first rate~\eqref{eq:tr-lin-time}
is similar to those of \citet[Thm.~4.11]{ZhangShLi17}, though
with somewhat more explicit dependence on $t$.

In the ``hard case,'' which corresponds to $u_{\min}^T b = 0$ and $\lmin +
\ltr = 0$ (cf.~\cite[Ch.~7]{ConnGoTo00}), both the bounds in
Theorem~\ref{thm:tr} become vacuous, and indeed $\itertr_t$ may not converge
to the global minimizer in this case. However, as the
bound~\eqref{eq:tr-sublin-time} depends only logarithmically on $u_{\min}^T
b$, it remains valid even extremely close to the hard case. In
Section~\ref{sec:upper-rand} we describe two simple randomization 
techniques
with convergence guarantees that are valid in the hard case as well.

\subsection{Proof sketch}\label{sec:upper-proof-outline}
Our analysis reposes on two elementary observations. First, we note that  
Krylov subspaces are invariant to shifts by scalar matrices, \ie 
$\mc{K}_t(A,b) = \mc{K}_t(A_\lambda, b)$ for any $A,b,t$ where 
$\lambda\in\R$, and 
\begin{equation*}
A_\lambda \defeq A + \lambda I. 
\end{equation*}
Second, we observe 
that  for every point $x$ and $\lambda\in\R$
\begin{align}\label{eq:tr-gamma-pivot-outline}
\f(x) - \f(\str)  & = \f[A_{\lambda},b](x) - \f[A_{\lambda},b](\str) + 
\frac{\lambda}{2}(\norm{\str}^2 - \norm{x}^2) 
 \end{align}
Our strategy then is to choose $\lambda$ such that $A_\lambda \succeq 
0$, 
and then use known results to find $y_t \in \Krylov[t][A_\lambda,b] = 
\Krylov[t][A,b]$ that rapidly reduces the ``convex error'' term 
$\f[A_{\lambda},b](y_t) - \f[A_{\lambda},b](\str)$. 
We then adjust $y_t$ to obtain a feasible 
point $x_t$ such that the ``norm error'' term 
$\frac{\lambda}{2}(\norm{\str}^2 - \norm{x_t}^2)$ is small.
To establish linear convergence, we take $\lambda=\ltr$ and adjust the 
norm of $y_t$ by taking $x_t=(1-\alpha)y_t$ for some small $\alpha$ that 
guarantees $x_t$ is feasible and that the ``norm error'' term is small. To 
establish sublinear convergence we set $\lambda=-\lmin$ and take $x_t = 
y_t + \alpha \cdot z_t$, where $z_t$ is an approximation for $u_{\min}$ 
within $\Krylov$, and $\alpha$ is chosen to make 
$\norm{x_t}=\norm{\str}$. This means the ``norm error'' vanishes, 
while the ``convex error'' cannot increase too much, as $A_{-\lmin}z_t 
\approx A_{-\lmin}u_{\min}=0$.

Our approach for proving the sublinear rate of convergence is inspired 
by~\citet{Ho-NguyenKi16}, who also rely on Nesterov's method in 
conjunction 
with Lanczos-based eigenvector approximation. The analysis 
in~\cite{Ho-NguyenKi16} uses an algorithmic reduction, proposing to apply 
the Lanczos method (with a random vector instead of $b$) to approximate 
$u_{\min}$ and $\lmin$, then run Nesterov's method on an approximate 
version of the ``convex error'' term, and then use the approximated 
eigenvector to adjust the norm of the result. We instead argue that all the 
ingredients for this reduction already exist in the Krylov subspace 
$\Krylov$, obviating the need for explicit eigenvector estimation or 
 actual application of accelerated gradient descent.

\subsection{Building blocks}
Our proof uses the following classical results.
\begin{restatable}[Approximate matrix 
inverse]{lemma}{lemLinCheby}\label{lem:lin-cheby}
	Let $\alpha,\beta$ satisfy $0 < \alpha \le \beta$, and let $\kappa = 
	\beta/\alpha$. For any $t\ge1$ there exists a polynomial $p$ 
	of degree at most $t-1$, such that for every $M$ satisfying $\alpha I 
	\preceq M \preceq \beta I$,
	\begin{equation*}
	\opnorm{I - Mp(M)} \le 2e^{-2t/\sqrt{\kappa}}.
	\end{equation*}
\end{restatable}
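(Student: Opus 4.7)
The plan is to construct the polynomial $p$ explicitly as a scaled and shifted Chebyshev polynomial, reducing the operator-norm bound to a uniform polynomial approximation problem on the interval $[\alpha,\beta]$. Concretely, since $M$ is symmetric with spectrum in $[\alpha,\beta]$, the operator norm $\opnorm{I - Mp(M)}$ equals $\max_{\lambda \in \mathrm{spec}(M)} |q(\lambda)|$ where $q(x) := 1 - x p(x)$. So the task reduces to: find a degree-$t$ polynomial $q$ with $q(0)=1$ and $\sup_{x\in[\alpha,\beta]}|q(x)|\le 2 e^{-2t/\sqrt{\kappa}}$. Then $p(x) = (1-q(x))/x$ is a polynomial of degree at most $t-1$ (since $q(0)=1$).

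For the construction, I would take the classical minimax choice
\[
q(x) \;=\; \frac{T_t\!\left(\frac{\beta+\alpha-2x}{\beta-\alpha}\right)}{T_t\!\left(\frac{\beta+\alpha}{\beta-\alpha}\right)},
\]
where $T_t$ is the degree-$t$ Chebyshev polynomial of the first kind. The affine map $x\mapsto (\beta+\alpha-2x)/(\beta-\alpha)$ sends $[\alpha,\beta]$ onto $[-1,1]$, where $|T_t|\le 1$, so the numerator has absolute value at most $1$ on $[\alpha,\beta]$. Also $q(0)=1$ by construction, and $q$ is a polynomial of degree exactly $t$ in $x$.

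It remains to lower bound the denominator $T_t\!\left(\tfrac{\beta+\alpha}{\beta-\alpha}\right)=T_t\!\left(\tfrac{\kappa+1}{\kappa-1}\right)$. Using the identity $T_t(\cosh\theta) = \cosh(t\theta)$ with $\cosh\theta = (\kappa+1)/(\kappa-1)$, a short calculation gives $e^\theta = (\sqrt{\kappa}+1)/(\sqrt{\kappa}-1)$, hence
\[
T_t\!\left(\tfrac{\kappa+1}{\kappa-1}\right) \;=\; \cosh(t\theta) \;\ge\; \tfrac{1}{2}\Big(\tfrac{\sqrt{\kappa}+1}{\sqrt{\kappa}-1}\Big)^{t}.
\]
Combining the two bounds yields $\sup_{x\in[\alpha,\beta]} |q(x)| \le 2\big(\tfrac{\sqrt{\kappa}-1}{\sqrt{\kappa}+1}\big)^{t}$.

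The only step that requires a small additional argument is the final conversion to the stated exponential form: I would show that $\tfrac{\sqrt{\kappa}-1}{\sqrt{\kappa}+1} \le e^{-2/\sqrt{\kappa}}$, which follows from the series expansion $\log\tfrac{1-u}{1+u} = -2\sum_{k\ge 0} u^{2k+1}/(2k+1) \le -2u$ applied at $u=1/\sqrt{\kappa}\in(0,1]$. Raising to the $t$-th power gives the claimed $2e^{-2t/\sqrt{\kappa}}$ bound. The degenerate case $\kappa=1$ (i.e.\ $\alpha=\beta$) is handled trivially by $p(x)\equiv 1/\alpha$, for which $I - Mp(M)=0$. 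No real obstacle arises; the only subtlety is the elementary inequality in the last step, which is what produces the constant $2$ in the exponent (rather than $1$).
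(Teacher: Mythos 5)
Your proof is correct and follows essentially the same route as the paper's: the paper also reduces the operator-norm bound to the scalar minimax problem $\min_p \max_{x\in[\alpha,\beta]}|1-xp(x)|$, uses the identical shifted Chebyshev polynomial $T_t\bigl(\tfrac{\beta+\alpha-2x}{\beta-\alpha}\bigr)/T_t\bigl(\tfrac{\kappa+1}{\kappa-1}\bigr)$, and concludes via the same elementary inequality $\log\tfrac{\sqrt{\kappa}+1}{\sqrt{\kappa}-1}\ge 2/\sqrt{\kappa}$ (which you derive from the power series where the paper compares derivatives). No gaps.
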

\begin{restatable}[Convex trust-region 
problem]{lemma}{lemAGD}\label{lem:agd}
	Let $t\ge1$, $M\succeq 0$, $v\in \R^d$ and $r\ge 0$, and let 
	$\f[M,v](x) 
	=\half x^T M x + v^T x$. There exists $x_t \in 
	\Krylov[t][M,v]$ such that 
	\begin{equation*}
	\norm{x_t} \le r
	~~\mbox{and}~~
	\f[M,v](x_t) - \min_{\norm{x}\le r} \f[M,v](x) 
	\le \frac{4\lambda_{\max}(M) \cdot r^2}{(t+1)^2}.
	\end{equation*}
\end{restatable}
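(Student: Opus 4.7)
The plan is to take $x_t$ to be the output of Nesterov's accelerated projected gradient method applied to $\f[M,v]$ over the Euclidean ball $\{x:\norm{x}\le r\}$, initialized at $x_0=0$ and run for $t$ iterations. The convergence bound will follow immediately from the classical AGD guarantee; the real content is verifying that the iterates of the method remain inside $\Krylov[t][M,v]$, so that an element of the Krylov subspace in fact attains the bound.

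For the rate, the hypothesis $M\succeq 0$ makes $\f[M,v]$ convex, and $M\preceq\lmax(M) I$ makes it $\lmax(M)$-smooth. The standard guarantee for projected accelerated gradient descent with constant step $1/\lmax(M)$ then yields
\[
\f[M,v](x_t)-\min_{\norm{x}\le r}\f[M,v](x)\;\le\;\frac{2\lmax(M)\norm{x_0-x_\star}^2}{(t+1)^2}\;\le\;\frac{2\lmax(M)\,r^2}{(t+1)^2},
\]
since $x_0=0$ and $\norm{x_\star}\le r$, which lies well within the claimed $4\lmax(M)\,r^2/(t+1)^2$.

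For Krylov subspace membership, I would argue by induction on $k\ge 0$ that both the main iterate $x_k$ and the momentum auxiliary iterate $y_k$ lie in $\Krylov[\max(k,1)][M,v]$, with the base case $x_0=y_0=0$. Three ingredients drive the step: (i) $\nabla\f[M,v](y)=My+v$, so if $y_k\in\Krylov[k][M,v]$ then $\nabla\f[M,v](y_k)\in\Krylov[k+1][M,v]$, using $M\cdot\Krylov[k][M,v]\subseteq\Krylov[k+1][M,v]$ and $v\in\Krylov[1][M,v]$; (ii) Euclidean projection onto the ball $\{\norm{x}\le r\}$ is multiplication by a nonnegative scalar, which preserves every Krylov subspace; (iii) momentum updates are linear combinations of existing iterates. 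Together these give $x_t\in\Krylov[t][M,v]$ with $\norm{x_t}\le r$ by construction.

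The only real obstacle is selecting a form of the AGD guarantee tailored to constrained problems with the $\norm{x_0-x_\star}^2/(t+1)^2$ dependence, rather than the slightly looser ``initial function-gap'' style bound sometimes quoted. Once such a reference is in hand, everything else is bookkeeping on Krylov subspace closure under the gradient, projection, and momentum operations.
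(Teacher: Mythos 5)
Your proposal is correct and follows essentially the same route as the paper: run projected accelerated gradient descent from $x_0=0$ over the ball of radius $r$, and show by induction that gradient steps, ball projections (which are scalar multiplications), and momentum combinations all keep the iterates in $\Krylov[t][M,v]$. The one obstacle you flag---finding a constrained-AGD guarantee with $\norm{x_0-x_\star}^2/(t+1)^2$ dependence rather than an initial function-gap term---is precisely the point the paper addresses by invoking Tseng's Corollary 2(b), which gives $4\lmax(M)\max_{z\in Q}\norm{z}^2/(t+1)^2$; a FISTA-style composite bound would serve equally well.
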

\begin{restatable}[{Finding 
	eigenvectors, \cite[Theorem 
	4.2]{KuczynskiWo92}}]{lemma}{lemEigen}\label{lem:eigenvec-tight}
	Let $M\succeq 0$ be such that $u^T M u = 0$ for some unit vector 
	$u\in\R^d$, and let 
	$v\in\R^d$. For every $t\ge1$ there exists $z_t \in \Krylov[t][M,v]$ such 
	that
	\begin{equation*}
	\norm{z_t} = 1
	~~\mbox{and}~~
	z_t^T M z_t \le \frac{\opnorm{M}}{16(t-\half)^2}
	\log^2\left(-2+4\frac{\norm{v}^2}{(u^T v)^2}\right).
	\end{equation*}
\end{restatable}
While these lemmas are standard, their explicit forms are
useful, and we prove them in Section~\ref{sec:prelim-proofs} in the
supplement.  Lemmas~\ref{lem:lin-cheby} and~\ref{lem:eigenvec-tight} are
consequences of uniform polynomial approximation results (cf.\ 
supplement,
Sec.~\ref{sec:polys}). To prove Lemma~\ref{lem:agd} we invoke Tseng's
results on a variant of Nesterov's accelerated gradient
method~\cite{Tseng08}, arguing that its
iterates lie in the Krylov subspace.

\subsection{Proof of Theorem~\ref{thm:tr}}

\paragraph{Linear convergence}\label{sec:upper-lin-proof}
Recalling the notation $\Atr = A + \ltr I$, let $y_t = -p(\Atr)b = p(\Atr)\Atr 
\str$, for the $p\in\mc{P}_t$ which 
Lemma~\ref{lem:lin-cheby} guarantees to satisfy $\opnorm{p(\Atr)\Atr - 
	I}\le 2e^{-2t/\sqrt{\kappa(\Atr)}}$. Let
\begin{equation*}
x_t = (1-\alpha) y_t,
~\mbox{where}~
\alpha = \frac{\norm{y_t} - \norm{\str}}{\max\{\norm{\str},\norm{y_t}\}},
\end{equation*}
so that we are guaranteed $\norm{x_t}\le\norm{\str}$ for any value of 
$\norm{y_t}$. Moreover
\begin{equation*}
|\alpha| = 
\frac{|\norm{y_t}-\norm{\str}|}{\max\{\norm{\str},\norm{y_t}\}} \le 
\frac{\norm{y_t - \str}}{\norm{\str}} =
\frac{\norm{(p(\Atr)\Atr-I)\str}}{\norm{\str}} 
\le 2e^{-2t/\sqrt{\kappa(\Atr)}},
\end{equation*}
where the last transition used $\opnorm{p(\Atr)\Atr - 
	I}\le 2e^{-2t/\sqrt{\kappa(\Atr)}}$.

Since $b = -\Atr\str$, we have 
$\f[\Atr,b](x) = \f[\Atr,b](\str) + \half\norms{\Atr^{1/2}(x-\str)}^2$. 
The equality~\eqref{eq:tr-gamma-pivot-outline} with $\lambda = \ltr$ and 
$\norm{x_t}\le 
\norm{\str}$ therefore implies
\begin{equation}\label{eq:tr-lin-subopt-bound}
\f(x_t) - \f(\str) \le \half\norm{\Atr^{1/2}(x_t - \str)}^2 + 
\ltr\norm{\str}(\norm{\str} - \norm{x_t}).
\end{equation}
When $\norm{y_t} \ge \norm{\str}$ we have $\norm{x_t} = \norm{\str}$ 
and the second term vanishes. When $\norm{y_t} < \norm{\str}$,
\begin{equation}\label{eq:tr-lin-norm-diff}
\norm{\str} - \norm{x_t} = \norm{\str} - \norm{y_t} - 
\frac{\norm{y_t}}{\norm{\str}}\cdot (\norm{\str} - \norm{y_t} )
= \norm{\str}\alpha^2 \le 4e^{-4t/\sqrt{\kappa(\Atr)}}\norm{\str}.
\end{equation}
We also have, 
\begin{align}\label{eq:tr-linear-quad}
  \lefteqn{\norm{\Atr^{1/2}(x_t - \str)} = 
    \norm{\left([1-\alpha]p(\Atr)\Atr-I\right)\Atr^{1/2}\str}} \nonumber \\
  & ~~ \le (1+|\alpha|)\norm{\left(p(\Atr)\Atr-I\right)\Atr^{1/2}\str} + 
  |\alpha| \norm{\Atr^{1/2}\str} %
  \le 6 \norm{\Atr^{1/2}\str}  e^{-2t/\sqrt{\kappa(\Atr)}},
\end{align}
where in the final transition we used our upper bounds on $\alpha$ 
and $\opnorm{p(\Atr)\Atr - I}$, as well as $|\alpha|\le 1$.
Substituting the bounds~\eqref{eq:tr-lin-norm-diff}
and~\eqref{eq:tr-linear-quad} into
inequality~\eqref{eq:tr-lin-subopt-bound}, we have
\begin{equation}\label{eq:tr-lin-time-bound-stronger}
\f(x_t) - \f(\str) \le \left(18 \str^T \Atr \str +  
4\ltr\norm{\str}^2\right) 
e^{-4t/\sqrt{\kappa(\Atr)}},
\end{equation}
and the final bound follows from recalling that 
$\f(0)-\f(\str) = \half \str^T \Atr \str + \frac{\ltr}{2}\norm{\str}^2$ 
and substituting $\kappa(\Atr) = (\lmax + \ltr)/(\lmin + \ltr)$. 
To conclude the proof we note 
that 
$(1-\alpha)p(\Atr) = (1-\alpha)p(A + \ltr I) = \tilde{p}(A)$ for some 
$\tilde{p} \in \mc{P}_t$, so that $x_t \in \mc{K}_t(A, b)$ and 
$\norm{x_t}\le R$, 
and therefore $\f(\itertr_t) \le \f(x_t)$.

\paragraph{Sublinear convergence}\label{sec:upper-sub-proof}

\newcommand{\Agam}{A_{0}}
\newcommand{\sgam}{\tilde{x}^\star_0}

Let $\Agam \defeq  A - \lmin I \succeq 0$ and apply 
Lemma~\ref{lem:agd} with $M=\Agam$, $v=b$ and $r=\norm{\str}$ to 
obtain $y_t \in \Krylov[t][\Agam, b] = \Krylov$ such that 
\begin{equation}\label{eq:tr-sublin-yt}
\norm{y_t}\le 
\norm{\str}
~\mbox{and}~
\f[\Agam,b](y_t) - \f[\Agam,b](\str) \le 
\f[\Agam,b](y_t) - \min_{\norm{x}\le\norm{\str}}\f[\Agam,b](x) \le 
\frac{4\opnorm{\Agam}\norm{\str}^2}{(t+1)^2}.
\end{equation}
If $\lmin \ge 0$, equality~\eqref{eq:tr-gamma-pivot-outline} with 
$\lambda=-\lmin$ along with~\eqref{eq:tr-sublin-yt} means we are done, 
recalling that $\opnorm{\Agam} = \lmax-\lmin$. 
For $\lmin< 0$, apply 
Lemma~\ref{lem:eigenvec-tight} with $M=\Agam$ and $v=b$ to obtain 
$z_t\in\Krylov$ such that
\begin{equation}\label{eq:tr-sublin-zt}
\norm{z_t} = 1
~~\mbox{and}~~
z_t^T \Agam z_t \le \frac{\opnorm{\Agam}}{16(t-\half)^2}
\log^2\left(4\frac{\norm{b}^2}{(u_{\min}^T b)^2}\right).
\end{equation}
We form the vector
\begin{equation*}
x_t = y_t + \alpha \cdot z_t\in\mc{K}_t(A,b),
\end{equation*}
and choose $\alpha$ to satisfy
\begin{equation*}
\norm{x_t} = \norm{\str}
~~\mbox{and}~~
\alpha \cdot z_t ^T (\Agam y_t  + b) = 
\alpha \cdot z_t ^T  \grad \f[\Agam,b](y_t) \le 0.
\end{equation*}
We may 
always choose such $\alpha$ because $\norm{y_t}\le\norm{\str}$ and 
therefore $\norm{y_t + \alpha z_t} = \norm{\str}$ has both a 
non-positive and a non-negative solution in $\alpha$. Moreover because 
$\norm{z_t}=1$ 
we have that $|\alpha| \le 2\norm{\str}$. 
The property 
$\alpha \cdot z_t ^T  \grad \f[\Agam,b](y_t) \le 0$ of our 
construction of $\alpha$ along with $\hess \f[\Agam,b] = \Agam$, 
gives us,
\begin{equation*}
\f[\Agam,b](x_t) = \f[\Agam,b](y_t) + \alpha \cdot z_t ^T  \grad 
\f[\Agam,b](y_t) + \frac{\alpha^2}{2} z_t^T \Agam z_t
\le \f[\Agam,b](y_t) + \frac{\alpha^2}{2} z_t^T \Agam z_t.
\end{equation*}
Substituting this bound along with $\norm{x_t}=\norm{\str}$ and 
$\alpha^2 \le 4\norm{\str}^2$ into~\eqref{eq:tr-gamma-pivot-outline} 
with $\lambda=-\lmin$ gives
\begin{equation*}
\f(x_t) - \f(\str) \le \f[\Agam,b](y_t) - \f[\Agam,b](\str) + 
2\norm{\str}^2 
z_t^T \Agam z_t.
\end{equation*}
Substituting in the bounds~\eqref{eq:tr-sublin-yt} 
and~\eqref{eq:tr-sublin-zt} concludes the proof for the case $\lmin < 
0$.

\subsection{Randomizing away the hard case}\label{sec:upper-rand}

\newcommand{\jitertr}{\hat{s}^{\mathsf{tr}}}
\newcommand{\pitertr}{\tilde{s}^{\mathsf{tr}}}
\newcommand{\pb}{\tilde{b}}
\newcommand{\pstr}{\tilde{x}^{\star}_{\mathsf{tr}}}
\newcommand{\pltr}{\tilde{\lambda}^{\star}}
\newcommand{\pAtr}{A_{\pltr}}

Krylov subspace solutions may fail to converge to global solution when both
$\ltr = -\lmin$ and $u_{\min}^T b = 0$, the so-called hard
case~\cite{ConnGoTo00,NocedalWr06}. Yet as with eigenvector
methods~\cite{KuczynskiWo92,GolubVa89}, simple randomization
approaches allow us to handle the hard case with high probability, at the
modest cost of introducing to the error bounds a logarithmic dependence 
on $d$. Here we describe two such approaches.

In the first approach, we draw a spherically symmetric random vector $v$, 
and consider the \emph{joint Krylov subspace}
\begin{equation*}
	\Krylov[2t][A, \{b, v\}] \defeq \mathrm{span}\{b, Ab, \ldots, A^{t-1}b, 
	v, Av, \ldots, A^{t-1}v\}.
\end{equation*}
The trust-region and cubic-regularized problems~\eqref{eq:problems} can 
be solved efficiently in $\Krylov[2t][A, \{b, v\}]$ using the \emph{block 
Lanczos} method~\cite{CullumDo74,Golub77}; we survey this technique in 
Section~\ref{sub:block-lanczos} in the 
supplement. The analysis in the previous section immediately implies the 
following convergence guarantee.
\begin{restatable}{corollary}{corRandJoin}\label{cor:tr-rand-joint}
	Let $v$ be uniformly distributed on the unit sphere in $\R^d$, and 
	$$\jitertr_t \in \argmin_{x\in\Krylov[\floor{t/2}][A,\{b,v\}], \norm{x}\le 
	R} \f[A,b](x).$$ For any $\delta > 0$, 
	\begin{equation}\label{eq:tr-joint-sublin-time}
	\f(\jitertr_t) - \f(\str) 
	\le \frac{(\lmax - 
		\lmin)R^2}{(t-1)^2} 
	\left[ 
	16 + 2\cdot\I_{\{\lmin < 0\}}
	\log^2\left(\frac{2\sqrt{d}}{ \delta}\right)
	\right]
	\end{equation}
	with probability at least $1-\delta$ with respect to the random choice 
of 	$v$. 
\end{restatable}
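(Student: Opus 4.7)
The plan is to mirror the sublinear-convergence argument from the proof of Theorem~\ref{thm:tr}, substituting the random vector $v$ for $b$ wherever Lemma~\ref{lem:eigenvec-tight} is invoked. This substitution is exactly what the joint Krylov subspace permits: with $\tau = \floor{t/2}$, both $\Krylov[\tau][A,b]$ and $\Krylov[\tau][A,v]$ sit inside $\Krylov[\tau][A,\{b,v\}]$, so elements drawn from each can be freely combined. The convex piece of the bound will still use $b$ (since $\str$ depends on $b$), while the eigenvector piece switches to $v$ and so inherits anticoncentration instead of the brittle $|u_{\min}^T b|$ term.

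Concretely, let $\Agam = A - \lmin I \succeq 0$. Apply Lemma~\ref{lem:agd} with $M=\Agam$, vector $b$, and radius $r = \norm{\str}$ to obtain $y_\tau \in \Krylov[\tau][A,b]$ with $\norm{y_\tau} \le \norm{\str}$ and $\f[\Agam,b](y_\tau) - \f[\Agam,b](\str) \le 4\opnorm{\Agam}\norm{\str}^2/(\tau+1)^2$. If $\lmin < 0$, apply Lemma~\ref{lem:eigenvec-tight} with $M=\Agam$ and the random vector $v$ (rather than $b$) to produce $z_\tau \in \Krylov[\tau][A,v]$ with $\norm{z_\tau}=1$ and $z_\tau^T \Agam z_\tau \le \frac{\opnorm{\Agam}}{16(\tau-\half)^2}\log^2(4/|u_{\min}^T v|^2)$, using $\norm{v}=1$; otherwise take $z_\tau = 0$. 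Form $x_\tau = y_\tau + \alpha z_\tau \in \Krylov[\tau][A,\{b,v\}]$ with $\alpha$ chosen exactly as in Section~\ref{sec:upper-sub-proof} to enforce $\norm{x_\tau} = \norm{\str} \le R$ and $\alpha z_\tau^T \grad \f[\Agam,b](y_\tau) \le 0$, giving $|\alpha| \le 2\norm{\str}$. The identity~\eqref{eq:tr-gamma-pivot-outline} at $\lambda = -\lmin$ together with the descent inequality $\f[\Agam,b](x_\tau) \le \f[\Agam,b](y_\tau) + \half\alpha^2 z_\tau^T \Agam z_\tau$ then reproduces the deterministic bound
$$\f(x_\tau) - \f(\str) \le \bigl[\f[\Agam,b](y_\tau) - \f[\Agam,b](\str)\bigr] + 2\norm{\str}^2 z_\tau^T \Agam z_\tau.$$
Upper-bounding $\norm{\str}$ by $R$ and using $\tau \ge (t-1)/2$ to convert $(\tau+1)^{-2}$ and $(\tau-\half)^{-2}$ into $(t-1)^{-2}$ form absorbs the remaining numerical constants.

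Finally I would handle the probability. By rotation invariance of the uniform measure on $S^{d-1}$, $u_{\min}^T v$ is distributed as $X_1/\norm{X}$ for $X \sim N(0, I_d)$, whose density is $c_d(1-y^2)^{(d-3)/2}$ on $[-1,1]$ with $c_d \le \sqrt{d/(2\pi)}$; this yields $\P(|u_{\min}^T v| \le \eps) \le \eps\sqrt{2d/\pi} \le \eps\sqrt{d}$, so with probability at least $1-\delta$ we have $|u_{\min}^T v| \ge \delta/\sqrt{d}$. Substituting this lower bound gives $\log^2(4/|u_{\min}^T v|^2) \le \log^2(4d/\delta^2) = 4\log^2(2\sqrt{d}/\delta)$, which combined with the collected constants from the deterministic step produces the coefficients $16$ and $2$ in~\eqref{eq:tr-joint-sublin-time}. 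The only non-routine ingredient here is this anticoncentration estimate for spherical projections, but it is classical; every other step is a direct transcription of the sublinear part of the proof of Theorem~\ref{thm:tr}, with the joint Krylov structure making the decoupling of $y_\tau$ and $z_\tau$ possible.
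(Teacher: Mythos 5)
Your proposal is correct and follows essentially the same route as the paper's own (very terse) proof: rerun the sublinear argument of Theorem~\ref{thm:tr} with Lemma~\ref{lem:agd} applied in $\Krylov[\floor{t/2}][A,b]$ and Lemma~\ref{lem:eigenvec-tight} applied in $\Krylov[\floor{t/2}][A,v]$, combine in the joint subspace, and lower-bound $|u_{\min}^T v|$ by $\delta/\sqrt{d}$ with probability $1-\delta$. Your Gaussian-ratio density computation is just an explicit form of the $\mathrm{Beta}(\frac{1}{2},\frac{d-1}{2})$ anticoncentration fact the paper cites, and your constant accounting matches the stated coefficients $16$ and $2$.
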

\begin{proof}
	In the preceding proof of sublinear convergence, 
	apply Lemma~\ref{lem:agd} on $\Krylov[\floor{t/2}][A,b]$ and 
	Lemma~\ref{lem:eigenvec-tight} on $\Krylov[\floor{t/2}][A,v]$; the 
	constructed solution is in $\Krylov[\floor{t/2}][A,\{b,v\}]$. To bound 
	$|u_{\min}^T v|^2/\|v\|^2$, note that its 
	distribution is 
	$\textrm{Beta}(\frac{1}{2}, \frac{d-1}{2})$ and therefore $|u_{\min}^T 
	v|^2/\|v\|^2 \ge \delta^2 / d$ with probability greater than $1-\delta$ 
	(cf.~\cite[Lemma 4.6]{CarmonDu16}).
\end{proof}

Corollary~\ref{cor:tr-rand-joint} implies we can solve the trust-region 
problem to $\epsilon$ accuracy in roughly 
$\epsilon^{-1/2}\log d$ matrix-vector products, even in the 
hard case. The main drawback of this randomization approach is that 
half the matrix-vector products are expended on the random vector; when 
the problem is well-conditioned or when $|u_{\min}^T b|/\norms{b}$ is not 
extremely small, using the standard subspace solution is nearly twice as 
fast.

The second approach follows the proposal~\cite{CarmonDu16} to 
construct a perturbed version of the linear term $b$, denoted $\pb$, and 
solve the problem instance
$(A,\pb, R)$ in the Krylov subspace $\Krylov[t][A,\pb]$.
%
%
%
%
%
%

\begin{restatable}{corollary}{corRand}\label{cor:tr-rand}
	Let $v$ be uniformly distributed on the unit sphere in $\R^d$, let 
	$\sigma 
	> 0$ and let
	\begin{equation*}
	\pb = b  + \sigma \cdot v.
	\end{equation*}
	Let
	$\pitertr_t \in \argmin_{x\in\Krylov[t][A,\pb], \norm{x}\le R} 
	\f[A,\pb](x) \defeq
	\half x^T A x + \pb^T x$. For any $\delta > 0$, 
		\begin{equation}\label{eq:tr-pert-sublin-time}
	\f(\pitertr_t) - \f(\str) \le \frac{(\lmax - 
		\lmin)R^2}{(t-\half)^2} 
	\left[ 
	4 + \frac{\I_{\{\lmin < 0\}}}{2}
	\log^2\left(\frac{2\norms{\pb}\sqrt{d}}{\sigma \delta}\right)
	\right] + 2\sigma R
	\end{equation}
	with probability at least $1-\delta$ with respect to the random choice of 
	$v$. 
\end{restatable}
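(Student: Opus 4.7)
My plan is to reduce the corollary to the sublinear bound of Theorem~\ref{thm:tr} applied to the \emph{perturbed} problem instance $(A, \pb, R)$, together with two easy pieces: a deterministic comparison that translates a guarantee on $\f[A,\pb]$ into one on $\f[A,b]$, and a randomness-based lower bound on $|u_{\min}^T \pb|$ that avoids the hard case.

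First, let $\pstr$ denote a global minimizer of $\min_{\norm{x}\le R}\f[A,\pb](x)$, so $\norm{\pstr}\le R$ while $\lmin,\lmax,u_{\min}$ still refer to $A$ alone. Applying the sublinear bound~\eqref{eq:tr-sublin-time} of Theorem~\ref{thm:tr} to the triple $(A,\pb,R)$ yields
$$\f[A,\pb](\pitertr_t) - \f[A,\pb](\pstr) \le \frac{(\lmax-\lmin)R^2}{(t-\half)^2}\left[4 + \frac{\I_{\{\lmin<0\}}}{8}\log^2\!\left(\frac{4\norm{\pb}^2}{(u_{\min}^T\pb)^2}\right)\right].$$
To convert this into a bound on $\f[A,b]$, note that $\f[A,b](x) = \f[A,\pb](x) - \sigma v^T x$, and that both $\pitertr_t$ and $\str$ have norm at most $R$ while $\norm{v}=1$. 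Hence
$$\f[A,b](\pitertr_t) - \f[A,b](\str) = \bigl[\f[A,\pb](\pitertr_t) - \f[A,\pb](\str)\bigr] - \sigma v^T(\pitertr_t - \str) \le \bigl[\f[A,\pb](\pitertr_t) - \f[A,\pb](\pstr)\bigr] + 2\sigma R,$$
using optimality of $\pstr$ to get $\f[A,\pb](\str) \ge \f[A,\pb](\pstr)$ and $|v^T(\pitertr_t-\str)| \le 2R$.

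The remaining step is to lower-bound $|u_{\min}^T \pb|$. Writing $u_{\min}^T\pb = u_{\min}^T b + \sigma (u_{\min}^T v)$, I would invoke the classical fact that the one-dimensional projection $u_{\min}^T v$ of a uniform vector on $S^{d-1}$ has a density bounded by $\sqrt{d/(2\pi)} \le \sqrt{d}/2$ on $(-1,1)$ (Gautschi's inequality applied to the Beta density). Translation-invariance of Lebesgue measure then yields the anti-concentration bound $\P(|u_{\min}^T \pb| < \eps) \le \eps\sqrt{d}/\sigma$ for every $\eps>0$, so setting $\eps = \sigma\delta/\sqrt{d}$ gives $|u_{\min}^T \pb|\ge \sigma\delta/\sqrt{d}$ with probability at least $1-\delta$. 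On this event the identity $\tfrac{1}{8}\log^2(4r^2) = \tfrac{1}{2}\log^2(2r)$ applied with $r = \norm{\pb}/|u_{\min}^T\pb|$ simplifies the logarithmic factor to at most $\tfrac{1}{2}\log^2(2\norm{\pb}\sqrt{d}/(\sigma\delta))$, matching the stated bound.

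The main obstacle is the anti-concentration estimate in the last step; everything else is direct bookkeeping around Theorem~\ref{thm:tr}. Conceptually, the perturbation $\sigma v$ plays the same role here that random starts play for eigenvector methods: it pushes $|u_{\min}^T \pb|$ away from zero at the price of an additive $2\sigma R$ slack in the objective, and choosing $\sigma$ appropriately small recovers a clean error bound that degrades only logarithmically with $d$.
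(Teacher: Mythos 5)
Your proposal is correct and follows essentially the same route as the paper's proof: apply the sublinear bound~\eqref{eq:tr-sublin-time} to the perturbed instance $(A,\pb,R)$, pass from $\f[A,\pb]$ to $\f$ at the cost of $2\sigma R$ using $\norm{v}=1$ and optimality of the perturbed solution, and lower-bound $|u_{\min}^T\pb|$ by $\sigma\delta/\sqrt{d}$ via anti-concentration of the spherical projection (which the paper delegates to \cite[Lemma 4.6]{CarmonDu16} but you rederive directly). The only cosmetic difference is that you make the density bound and the identity $\tfrac18\log^2(4r^2)=\tfrac12\log^2(2r)$ explicit.
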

\noindent
See section~\ref{sec:tr-rand-proof} in the supplement for a short proof, 
which consists of arguing that $\f$ and $\f[A,\pb]$ deviate by at most 
$\sigma R$ at any feasible point, and applying a probabilistic lower bound 
on $|u_{\min}^T \pb|$. For any desired accuracy $\epsilon$, using 
Corollary~\ref{cor:tr-rand} with $\sigma = \epsilon/(4R)$ shows we can 
achieve this accuracy, with constant probability, in a number of Lanczos 
iterations that scales as 
$\epsilon^{-1/2}\log(d/\epsilon^2)$. Compared to the 
first approach, this rate of convergence is asymptotically slightly slower (by 
a factor of $\log{\frac{1}{\epsilon}}$), and moreover requires us to decide 
on a desired level of accuracy in advance. 
However, the second approach avoids the 2x slowdown that the first 
approach exhibits on easier problem instances. In Section~\ref{sec:exp} we  
compare the two approaches empirically.

We remark that the linear convergence guarantee~\eqref{eq:tr-lin-time} 
continues to hold for both randomization approaches. For the second 
approach, this is due to the fact that small perturbations to $b$ do not 
drastically change the condition number, as shown in~\cite{CarmonDu16}. 
However, this also means that we cannot expect a good condition number 
when perturbing $b$ in the hard case. Nevertheless, we believe it is 
possible to show that, with  
randomization, Krylov subspace methods exhibit linear 
convergence even in the hard case, where the condition number is replaced 
by the normalized eigen-gap $(\lmax-\lmin)/(\lambda_2 - \lmin)$, with 
$\lambda_2$  the smallest eigenvalue of $A$ larger than $\lmin$.

\section{The cubic-regularized problem}\label{sec:upper-cr}

We now consider the cubic-regularized problem
\begin{equation*}%
\minimize_{x\in\R^{d}}~\fcu\left(x\right)\defeq
\f(x) +\frac{\rho}{3} \norm{x}^3 = 
\half x^{\T}Ax+b^{\T}x+\frac{\rho}{3} \norm{x}^3.
\end{equation*}
Any global minimizer of $\fcu$, denoted $\scu$, admits the 
characterization~\cite[Theorem 3.1]{CartisGoTo11}
\begin{equation}
\grad \fcu(\scu) = (A+\rs I)\, \scu+b =0 
~~ \mbox{and} ~~ \rho \norm{\scu} \ge -\lmin.
\label{eq:cr-optimality}
\end{equation}
Comparing this characterization to its counterpart~\eqref{eq:tr-optimality} 
for the trust-region problem, we see that any instance $(A,b,\rho)$ of cubic 
regularization has an \emph{equivalent trust-region instance} $(A,b,R)$, 
with $R=\norm{\scu}$. Theses instances are equivalent in that they have 
the same set of global minimizers. Evidently, the equivalent trust-region 
instance has optimal Lagrange multiplier $\ltr = \rs$. Moreover, at any 
trust-region feasible point $x$ (satisfying $\norm{x}\le 
R=\norm{\scu}=\norm{\str}$), the cubic-regularization optimality gap is 
smaller than its trust-region equivalent,
\begin{equation*}
\fcu(x)-\fcu(\scu) = \f(x) - \f(\str) 
+ \frac{\rho}{3}\big( \norm{x}^3 - \norms{\str}^3 \big)
\le \f(x) - \f(\str).
\end{equation*}
Letting $\itercu_t$ denote the minimizer of $\fcu$ in $\Krylov$ and letting 
$\itertr_t$ denote the Krylov subspace solution of the equivalent 
trust-region 
problem, we conclude that
\begin{equation}\label{eq:cr-tr-opt-gap-bound}
\fcu(\itercu_t) - \fcu(\scu) \le \fcu(\itertr_t) - \fcu(\scu) \le
\f(\itertr_t) - \f(\str);
\end{equation}
cubic regularization Krylov subspace solutions always have a 
\emph{smaller optimality gap} than their 
trust-region equivalents. The guarantees of Theorem~\ref{thm:tr} therefore 
apply to $\fcu(\itercu_t) - \fcu(\scu)$ as well, and we arrive at the 
following
\begin{corollary}\label{cor:cu}
	For every $t>0$, 
	\begin{equation}\label{eq:cr-lin-time}
	\fcu(\itercu_t) - \fcu(\scu) \le 36\left[\fcu(0) - \fcu(\scu) \right]
	\exp\left\{
	-4t\sqrt{\frac{\lmin + \rs}{\lmax +\rs}}
	\right\},
	\end{equation}
	and
	\begin{equation}\label{eq:cr-sublin-time}
	\fcu(\itercu_t) - \fcu(\scu) \le \frac{(\lmax - 
		\lmin)\norm{\scu}^2}{(t-\half)^2} 
	\left[ 
	4 + \frac{\I_{\{\lmin < 0\}}}{8}
	\log^2\left(\frac{4\norm{b}^2}{(u_{\min}^T b)^2}\right)
	\right].
	\end{equation}
\end{corollary}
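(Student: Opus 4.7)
The plan is to leverage the equivalence, already noted in the text, between the cubic instance $(A,b,\rho)$ and a trust-region instance $(A,b,R)$ with $R = \norm{\scu}$, so that $\scu$ is also a global minimizer $\str$ of the trust-region problem and the multipliers satisfy $\ltr = \rs$. In this notation, the derived inequality $\fcu(\itercu_t) - \fcu(\scu) \le \f(\itertr_t) - \f(\str)$ from~\eqref{eq:cr-tr-opt-gap-bound} reduces the corollary to invoking Theorem~\ref{thm:tr} on the equivalent trust-region instance, up to one bookkeeping issue discussed below.

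The sublinear bound~\eqref{eq:cr-sublin-time} is immediate: the right-hand side of~\eqref{eq:tr-sublin-time} depends on $\str$ only through $\norm{\str}^2 = \norm{\scu}^2$, $A$, and $b$, so substituting into $\fcu(\itercu_t) - \fcu(\scu) \le \f(\itertr_t) - \f(\str)$ yields~\eqref{eq:cr-sublin-time} with no further work.

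The linear bound~\eqref{eq:cr-lin-time} requires slightly more care, and this will be the main (minor) obstacle. A direct chain through Theorem~\ref{thm:tr} would replace the leading factor by $36[\f(0) - \f(\str)]$; but at the optimum the cubic penalty contributes a strictly positive term, so $\fcu(0) - \fcu(\scu) = -\f(\str) - \tfrac{\rho}{3}\norm{\scu}^3 \le \f(0) - \f(\str)$, which goes the wrong way. Instead, I would use the sharper intermediate estimate~\eqref{eq:tr-lin-time-bound-stronger} obtained inside the proof of Theorem~\ref{thm:tr}, namely $\f(\itertr_t) - \f(\str) \le (18\,\str^T \Atr \str + 4\ltr \norm{\str}^2)\,e^{-4t/\sqrt{\kappa(\Atr)}}$, and substitute $\ltr = \rs$, $\str = \scu$, $\Atr = \Acu$.

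To finish, I would compute $\fcu(0) - \fcu(\scu)$ using the optimality condition $b = -\Acu \scu$ together with $\rs = \rho\norm{\scu}$: one finds $\fcu(0) - \fcu(\scu) = \tfrac{1}{2}\scu^T \Acu \scu + \tfrac{1}{2}\rs\norm{\scu}^2 - \tfrac{1}{3}\rs\norm{\scu}^2 = \tfrac{1}{2}\scu^T \Acu \scu + \tfrac{1}{6}\rs\norm{\scu}^2$. Multiplying by $36$ gives $18\,\scu^T \Acu \scu + 6\rs\norm{\scu}^2$, which dominates the coefficient $18\,\scu^T \Acu \scu + 4\rs\norm{\scu}^2$ appearing in the intermediate estimate. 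Combining this with $\fcu(\itercu_t) - \fcu(\scu) \le \f(\itertr_t) - \f(\str)$ and noting that $\kappa(\Acu) = (\lmax+\rs)/(\lmin+\rs)$ yields~\eqref{eq:cr-lin-time}.
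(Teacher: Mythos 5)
Your proposal is correct and follows essentially the same route as the paper: the sublinear bound drops out of the gap relation~\eqref{eq:cr-tr-opt-gap-bound} directly, and for the linear bound the paper likewise bypasses~\eqref{eq:tr-lin-time} in favor of the intermediate estimate~\eqref{eq:tr-lin-time-bound-stronger}, combined with the same identity $\fcu(0)-\fcu(\scu)=\tfrac{1}{2}\scu^T\Acu\scu+\tfrac{1}{6}\rs\norm{\scu}^2$ so that $18\,\scu^T\Acu\scu+4\rs\norm{\scu}^2\le 36[\fcu(0)-\fcu(\scu)]$. Your explicit flagging of why the naive substitution of $\f(0)-\f(\str)$ fails is a nice touch but not a deviation from the paper's argument.
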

\begin{proof}
  Use the slightly stronger bound~\eqref{eq:tr-lin-time-bound-stronger}
  derived in the proof of Theorem~\ref{thm:tr} with the
  inequality $18 \str^T \Atr
  \str + 4\ltr\norm{\str}^2 \le 36[ \half\scu^T A \scu + \frac{1}{6}\rho 
  \norm{\scu}^3] =
  36[\fcu(0)-\fcu(\scu)]$.
\end{proof}

Here too it is possible to randomly perturb $b$ and obtain a guarantee for 
cubic regularization that applies in the hard case. In~\cite{CarmonDu16} 
we carry out such analysis for gradient descent, and show that 
perturbations to $b$ with norm $\sigma$ can increase $\norms{\scu}^2$ 
by 
at most $2\sigma/\rho$ \cite[Lemma 4.6]{CarmonDu16}. Thus the 
cubic-regularization equivalent of Corollary~\ref{cor:tr-rand} amounts to 
replacing $R^2$ with $\norm{\scu}^2+2\sigma/\rho$ 
in~\eqref{eq:tr-pert-sublin-time}.

We note briefly---without giving a full analysis---that
Corollary~\ref{cor:cu} shows that the practically successful Adaptive
Regularization using Cubics (ARC) method~\cite{CartisGoTo11} can find
$\epsilon$-stationary points in roughly $\epsilon^{-7/4}$ Hessian-vector
product operations (with proper randomization and subproblem stopping
criteria). Researchers have given such guarantees for a number of algorithms
that are mainly theoretical~\cite{AgarwalAlBuHaMa17,CarmonDuHiSi18}, as 
well as variants of accelerated gradient 
descent~\cite{CarmonDuHiSi17,JinNeJo17}, which while more practical still 
require careful parameter tuning. In contrast, ARC requires very little tuning 
and it is encouraging that it may also exhibit the enhanced 
Hessian-vector product complexity $\epsilon^{-7/4}$, which is at least 
near-optimal~\cite{CarmonDuHiSi17lii}.

\arxiv{\vspace{-6pt}}
\section{Lower bounds}\label{sec:lower}
\arxiv{\vspace{-6pt}}

\newcommand{\probparams}{\mathfrak{L}}

We now show that the guarantees in Theorem~\ref{thm:tr} and
Corollary~\ref{cor:cu} are tight up to numerical constants for adversarially
constructed problems. We state the result for the cubic-regularization
problem; corresponding lower bounds for the trust-region problem are
immediate from the optimality gap 
relation~\eqref{eq:cr-tr-opt-gap-bound}.\footnote{To obtain the correct 
prefactor in the trust-region equivalent of lower bound~\eqref{eq:cr-lb-lin} 
we may use the fact that $\fcu(0) - \fcu(\scu) = \frac{1}{2}b^T \Atr^{-1} b 
+ 
\frac{\rho}{6}\norm{\scu}^3 \ge 
\frac{1}{3}(\frac{1}{2} b^T \Atr^{-1} b + \frac{\ltr}{2}R^2)=
\frac{1}{3}(\f(0)-\f(\str))$.}

To state the result, we require a bit more notation. Let $\probparams$ map 
cubic-regularization problem instances of the form $(A,b,\rho)$ to the 
quadruple $(\lmin, \lmax, \ltr, \Delta)=\probparams(A,b,\rho)$ such that 
 $\lmin,\lmax$ are 
the extremal eigenvalues of $A$ and the solution $\scu = \argmin_x 
\fcu(x)$ satisfies $\rho\norm{\scu} = \ltr$, and $\fcu(0) - \fcu(\scu) = 
\Delta$. Similarly let $\probparams'$ map an instance $(A,b,\rho)$ to the 
quadruple $(\lmin, \lmax, \tau, R)$ where now $\norm{\scu}=R$ and 
$\norm{b}/|u_{\min}^T b|=\tau$, with $u_{\min}$ an eigenvector of $A$ 
corresponding to eigenvalue $\lmin$. 

With this notation in hand, we state our lower bounds. (See supplemental 
section~\ref{sec:lb-proof} for a proof.)

\begin{theorem}\label{thm:lb}
  Let $d, t \in \N$ with $t < d$
  and
  $\lmin,\lmax,\ltr,\Delta$
  be such that $\lmin \le \lmax$, $\ltr
  > (-\lmin)_+$, and $\Delta > 0$.
  There exists $(A, b, \rho)$ such that $\probparams(A,b,\rho) = (\lmin, 
  \lmax, \ltr, \Delta)$ and  for all $s\in\Krylov$,
  \begin{equation}\label{eq:cr-lb-lin}
    \fcu(s) - \fcu(\scu) > 	 
    \frac{1}{K}
    \left[\fcu(0) - \fcu(\scu) \right]
    \exp\left\{
    -\frac{4t}{\sqrt{\kappa}-1}
    \right\},
  \end{equation}
  where $K = 1+\frac{\ltr}{3(\ltr + \lmin)} $ and $\kappa = \frac{\ltr + 
  \lmax}{\ltr +\lmin}$. Alternatively, for any $\tau\ge 1$ and $R>0$, there 
  exists $(A, b, \rho)$ such that $\probparams'(A,b,\rho) = (\lmin, \lmax, 
  \tau, R)$ and for $s\in\Krylov$,
  \begin{equation}\label{eq:cr-lb-sub-ncvx}
    \fcu(s) - \fcu(\scu) > 
    \min\left\{
    (\lmax)_- - \lmin,
    \frac{\lmax-\lmin}{16(t-\half)^2}
    \log^2\left(\frac{\norm{b}^2}{( u_{\min}^T b)^2}\right)
    \right\}\frac{\norm{\scu}^2}{32},
  \end{equation}
  and %
  \begin{equation}\label{eq:cr-lb-sub-cvx}
    \fcu(s) - \fcu(\scu) > 
    \frac{(\lmax - \lmin)\norm{\scu}^2}{16(t+\half)^2}.
  \end{equation}
\end{theorem}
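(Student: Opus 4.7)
The plan is to reduce each of the three bounds to a classical polynomial approximation lower bound, using that every $s \in \Krylov$ has the form $s = p(A) b$ with $p \in \Polys[t]$. The main algebraic tool is the quadratic lower bound on the cubic gap,
\[
\fcu(s) - \fcu(\scu) \ge \half (s-\scu)^T (A + \rs I)(s-\scu),
\]
which follows from the second-order optimality $A + \rs I \succeq 0$ and a Taylor expansion of $\fcu$. Combined with $\scu = -(A+\rs I)^{-1}b$, this expresses $s - \scu = -r(A+\rs I)\scu$ for some polynomial $r\in\Polys[t+1]$ with $r(0) = 1$, so that the optimality gap is bounded below by a minimax polynomial problem on the spectrum of $A+\rs I$, weighted by the eigen-components of $\scu$.

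\textbf{Adversarial instances.} For the linear bound~\eqref{eq:cr-lb-lin}, I would take $A$ of dimension $d>t$ with eigenvalues spread across $[\lmin, \lmax]$---e.g., a tridiagonal matrix whose eigenvalues are (shifted) Chebyshev nodes---and $b\propto e_1$, so that $\Krylov$ grows by one dimension per iteration. Scaling $b$ and choosing $\rho$ then fixes $\ltr$ and $\Delta$, and the inner minimax becomes uniform approximation of $0$ on $[\lmin+\ltr,\ \lmax+\ltr]$ subject to $r(0)=1$; the classical Chebyshev lower bound $\bigl(\tfrac{\sqrt{\kappa}-1}{\sqrt{\kappa}+1}\bigr)^{2t}$ produces the $\exp\{-4t/(\sqrt{\kappa}-1)\}$ form. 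For the convex sublinear bound~\eqref{eq:cr-lb-sub-cvx}, I would use a Nesterov-style tridiagonal $A$ with $b \propto e_1$ so that $\Krylov = \mathrm{span}\{e_1,\ldots,e_t\}$; the residual $d-t$ coordinates of $\scu$ cannot be represented, and Nesterov's worst-case quadratic calculation yields the $(\lmax-\lmin)\norm{\scu}^2/(t+\half)^2$ lower bound. For the nonconvex sublinear bound~\eqref{eq:cr-lb-sub-ncvx}, I would augment this construction to hide the smallest eigenvalue from $b$: take $A$ with an isolated eigenvalue $\lmin$ and a bulk spectrum filling $(\lmin,\lmax]$, with $b$ chosen so that $|u_{\min}^T b|/\norm{b}=1/\tau$. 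A polynomial of degree $t$ cannot simultaneously resolve $\scu$ on the bulk and amplify the tiny $u_{\min}$-component of $\scu$ by more than a factor of $\log\tau/t$, in the manner of \citet{KuczynskiWo92}, which produces the $\log^2(\tau)/t^2$ factor.

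\textbf{Main obstacle.} The chief technical challenge is realizing the parameter tuples $(\lmin,\lmax,\ltr,\Delta)$ and $(\lmin,\lmax,\tau,R)$ exactly via the free parameters of the adversarial instances: because $\ltr$ is defined implicitly through $\rho\norm{\scu}=\ltr$ and $\Delta, R$ depend on $(A,b,\rho)$ nonlinearly, matching them requires carefully solving a small auxiliary system and checking monotonicity of the induced map. A secondary nuisance is translating the polynomial inequalities obtained after the Chebyshev/Nesterov/Kuczynski--Wozniakowski reductions into the stated prefactors (in particular the $K = 1 + \ltr/(3(\ltr+\lmin))$ normalizer in~\eqref{eq:cr-lb-lin}, which tracks the ratio between the quadratic and cubic contributions to $\fcu(0)-\fcu(\scu)$). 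Once the instance is in place and these bookkeeping issues are handled, the heart of each bound is a classical minimax result and the remaining steps are essentially plug-and-play.
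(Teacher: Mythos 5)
Your overall route matches the paper's: lower-bound the cubic gap by the quadratic form $\half(s-\scu)^T\Acu(s-\scu)$, write $s-\scu$ as a polynomial in $\Acu$ applied to $\scu$, and reduce to Chebyshev-type polynomial approximation lower bounds on diagonal instances, with a Kuczynski--Wozniakowski-style eigenvector-hardness reduction for the nonconvex sublinear bound~\eqref{eq:cr-lb-sub-ncvx} (the paper likewise sends the linear term to zero so the instance degenerates to an eigenvector problem). The parameter-matching and the $K$ prefactor are handled exactly as you anticipate, via a free scaling $\mu$ of $b$ and the choice $\rho=\ltr/\|\Atr^{-1}b\|$.

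The genuine gap is in the step you treat as plug-and-play. Once the instance is fixed, the quantity to be bounded below is a \emph{weighted least-squares} error $\min_{p}\sum_i w_i\,(1-\xi_i p(\xi_i))^2$, where the weights $w_i$ are the squared eigencomponents of $\Atr^{-1/2}b$ (resp.\ of $\scu$). This is \emph{not} the uniform approximation problem: for a generic choice of $b$ (e.g.\ ``$b\propto e_1$ with Chebyshev-node eigenvalues''), the weighted $L_2$ error can be far smaller than the $L_\infty$ Chebyshev error $\minmaxT[t]$, so the classical $\bigl(\frac{\sqrt{\kappa}-1}{\sqrt{\kappa}+1}\bigr)^{2t}$ bound does not transfer. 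The paper closes this by the duality (Von Neumann minimax swap) in Lemmas~\ref{lem:cheby-first} and~\ref{lem:cheby-second}: there exist nodes $\xi_0,\dots,\xi_t$ and a distribution $\pi$ with $\min_p\sum_k\pi_k(1-\xi_kp(\xi_k))^2=[\minmaxT[t]]^2$, and the adversarial $b$ must then be built to realize exactly these dual-optimal weights, $b=\mu\Atr^{1/2}\sqrt{\pi}$ (and $b=R\,\Atr\sqrt{\pi}$ for~\eqref{eq:cr-lb-sub-cvx}). Your proposal never identifies this duality step or the resulting constraint on $b$'s spectral weights, and the construction as written (Chebyshev-node eigenvalues, $b\propto e_1$) would need either a Jacobi-matrix/spectral-measure argument to recover the right weights or the explicit dual construction; without one of these the central inequality is unjustified.
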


The lower bounds~\eqref{eq:cr-lb-lin} matches
the linear convergence guarantee~\eqref{eq:cr-lin-time} to within a
numerical constant, as we may choose $\lmax,\lmin$ and $\ltr$ so that 
$\kappa$ is arbitrary and $K < 2$.  Similarly, lower 
bounds~\eqref{eq:cr-lb-sub-ncvx}
and~\eqref{eq:cr-lb-sub-cvx} match the sublinear convergence
rate~\eqref{eq:cr-sublin-time} for $\lmin < 0$ and $\lmin \ge 0$
respectively.  Our proof flows naturally from 
minimax characterizations of uniform polynomial approximations
(Lemmas~\ref{lem:cheby-first} and~\ref{lem:cheby-second} in the supplement),
which also play a crucial role in proving our upper bounds.

One consequence of the lower bound~\eqref{eq:cr-lb-lin} is the 
existence of extremely badly conditioned instances, say with $\kappa = 
(100d)^2$ and $K=3/2$, such that in the first $d-1$ iterations it is 
impossible to decrease the initial error by more than a factor of 2 (the initial 
error may be chosen arbitrarily large as well). However, since these 
instances have finite condition number we have $\scu\in\Krylov[d]$, and so 
the error supposedly drops to 0 at the $d$th iteration. This seeming 
discontinuity 
stems from the fact that in this case $\scu$ depends on the Lanczos basis 
of $\Krylov[d]$ through a very badly conditioned linear system and cannot 
be recovered with finite-precision arithmetic. Indeed, 
running Krylov subspace methods for $d$ iterations with inexact  
arithmetic often results in solutions that are very far from exact, while 
guarantees of the form~\eqref{eq:cr-lin-time} are more robust to 
roundoff errors~\cite{MuscoMuSi17,DruskinKn91,TrefethenBa97}.

While we state the lower bounds in Theorem~\ref{thm:lb} for points in the
Krylov subspace $\Krylov$, a classical ``resisting oracle'' construction due
to~\citet[Chapter 7.2]{NemirovskiYu83} (see
also~\cite[\S10.2.3]{Nemirovski94}) shows that (for $d>2t$) these lower
bounds hold also for \emph{any deterministic method} that accesses $A$ only
through matrix-vector products, and computes a single matrix-vector product
per iteration. The randomization we employ in 
Corollaries~\ref{cor:tr-rand-joint} and~\ref{cor:tr-rand}
breaks the lower bound~\eqref{eq:cr-lb-sub-ncvx} when $\lmin < 0$ and
$\norm{b}/|u_{\min}^T b|$ is very large, so there is some substantial power
from randomization in this case. However, \citet{Simchowitz18} recently 
showed that randomization 
cannot break the lower bounds for convex quadratics ($\lmin\ge0$ and 
$\rho=0)$.
%

\arxiv{\vspace{-6pt}}
\section{Numerical experiments}\label{sec:exp}
\arxiv{\vspace{-3pt}}
To see whether our analysis applies to non-worst case problem instances, 
we generate 5,000 random cubic-regularization problems with 
$d=10^{6}$ and controlled condition number 
$\kappa=(\lmax+\rs)/(\lmin+\rs)$ (see Section~\ref{sec:exp-details} in 
the supplement for more details). We repeat the experiment three 
times with different values of $\kappa$ and summarize the results in 
Figure~\ref{fig:exp}a.  
As seen in the figure, about 20 Lanczos iterations suffice to solve even the 
worst-conditioned instances to about $10\%$ accuracy, and 100 
iterations give accuracy better than $1\%$. Moreover, for $t \gtrapprox 
\sqrt{\kappa}$, the approximation error decays exponentially with precisely 
the rate $4/\sqrt{\kappa}$ predicted by our analysis, for almost all the 
generated problems. For $t \ll \sqrt{\kappa}$, the error decays 
approximately as $t^{-2}$. We conclude that the rates characterized by  
Theorem~\ref{thm:tr} are relevant beyond the worst case.

We conduct an additional experiment to test the effect of randomization for 
``hard case'' instances, where $\kappa = \infty$.
We generate such problem instances 
(see details in Section~\ref{sec:exp-details}), and 
compare the joint subspace randomization scheme 
(Corollary~\ref{cor:tr-rand-joint}) to the perturbation scheme 
(Corollary~\ref{cor:tr-rand}) with different perturbation magnitudes 
$\sigma$; the results are shown in Figure~\ref{fig:exp}b. For any fixed 
target accuracy, some choices of $\sigma$ 
yield faster convergence than the joint subspace scheme. However, for any 
fixed $\sigma$ optimization eventually hits a noise floor due to the 
perturbation, while the joint subspace scheme continues to improve. 
Choosing $\sigma$ requires striking a balance: if too large  
the noise floor is high and might even be worse 
than no perturbation at all; if too small, escaping the unperturbed error 
level will take too long, and the method might falsely declare convergence. 
A 
practical heuristic for safely choosing 
$\sigma$ 
is 
an interesting topic for future research.

\begin{figure}
	    \centering
	\hspace{-0.75cm}
	\begin{minipage}[t]{0.72\textwidth}
		\centering
		\includegraphics[height=5.025cm]{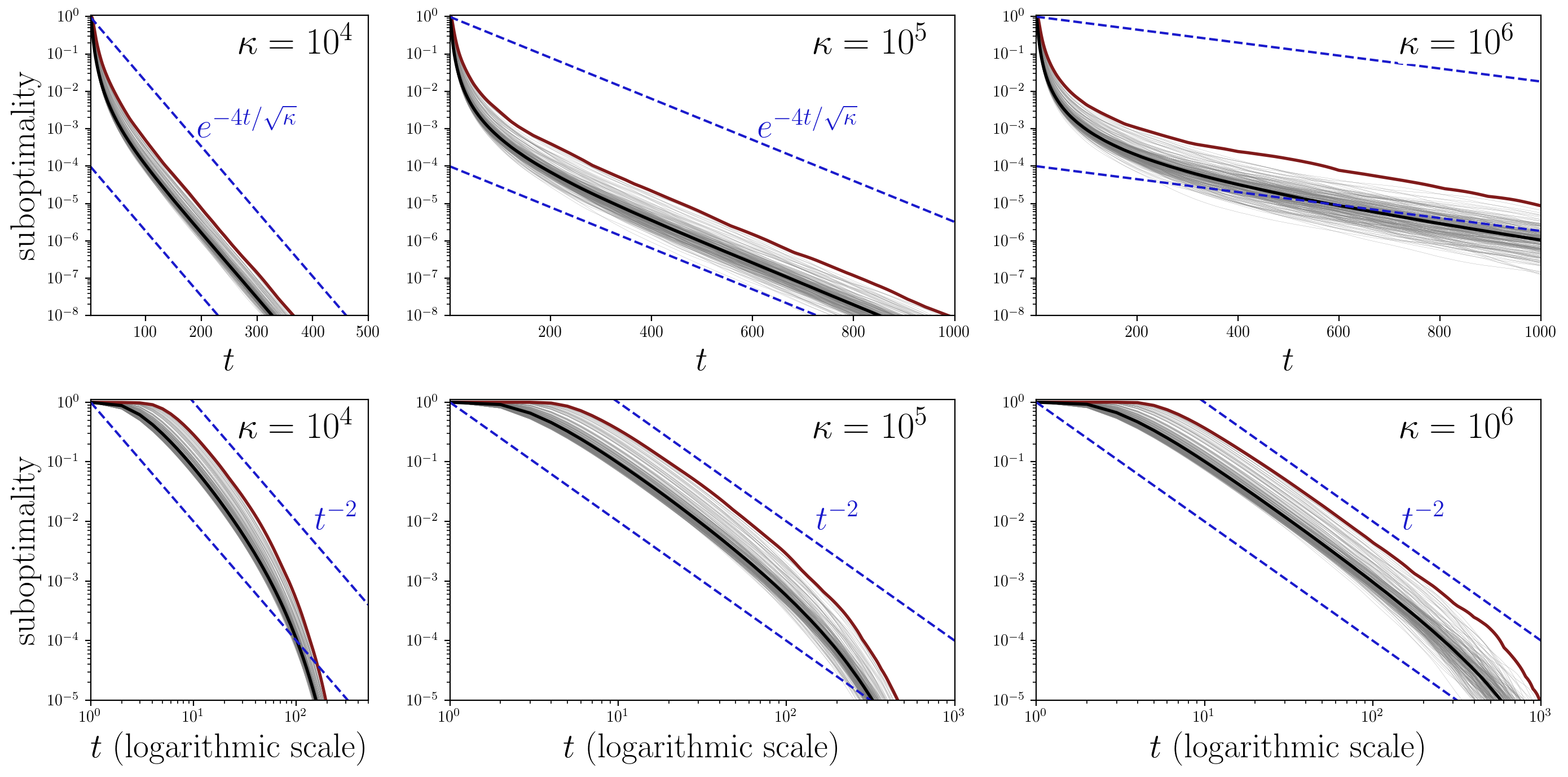}
		\\
		\footnotesize\textbf{(a)}
	\end{minipage}
	\begin{minipage}[t]{0.27\textwidth}
		\centering
		\includegraphics[height=5.1cm]{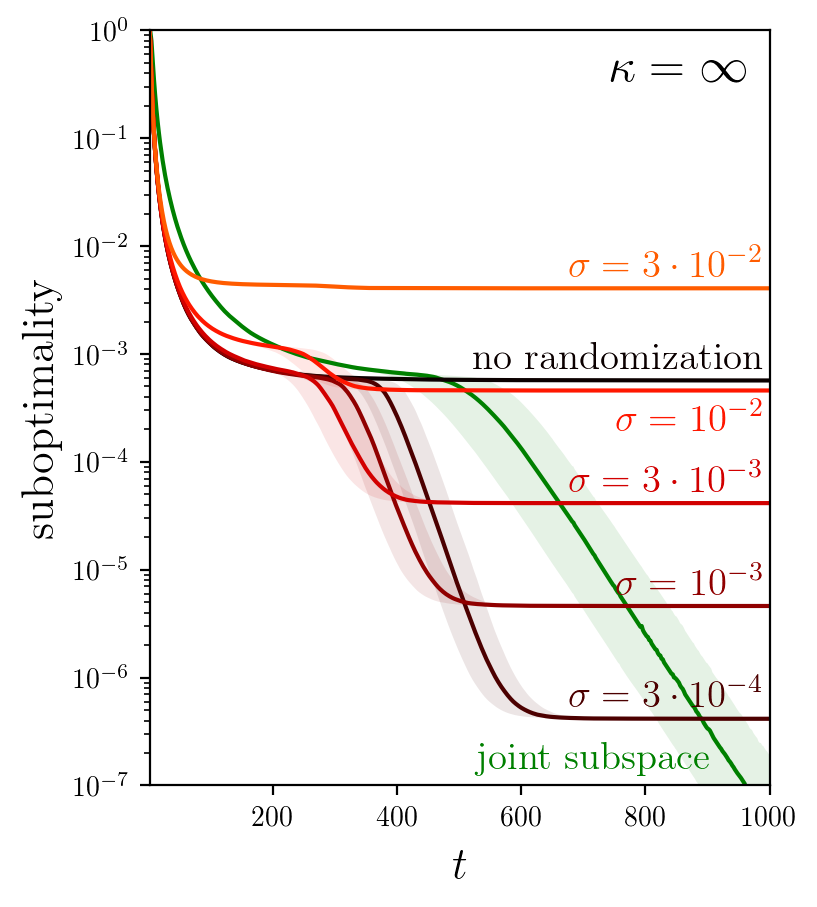}
		\\
		\nips{~~~~~~~~~~~~~~~~}
		\arxiv{~~~~~}
		\footnotesize\textbf{(b)}
	\end{minipage}
	\caption{\label{fig:exp}Optimality gap of 
	Krylov subspace solutions on random 
	cubic-regularization problems, versus subspace dimension 
	$t$. \textbf{(a)} Columns show ensembles with different 
	condition numbers 
	$\kappa$, and rows differ by scaling of $t$. Thin lines indicate 
	results for individual instances, and bold lines indicate ensemble median 
	and maximum suboptimality. 
	\textbf{(b)} Each line represents median suboptimality, and shaded 
	regions represent inter-quartile range.
	Different lines 
	correspond to different randomization settings.
}
\end{figure}

\newpage 

\section*{Acknowledgments}
We thank the anonymous reviewers for several helpful questions and 
suggestions. Both authors were supported by NSF-CAREER Award 1553086 
and the Sloan Foundation. YC was partially supported by the Stanford 
Graduate Fellowship.

\setlength{\bibsep}{6pt}
\bibliographystyle{abbrvnat}
\bibliography{bib}

\newpage

\appendix

\part*{Supplementary material}

\section{Computing Krylov subspace solutions}\label{sec:lanczos}
Generic instances of the trust-region and cubic-regularized problems can 
be globally optimized by solving the one-dimensional 
equations
\begin{equation}\label{eq:tr-lambda-search}
\norm{A_{\lambda}^{-1}b} = R~,~
\lambda  >  \max\{-\lmin, 0\}.
\end{equation}
and
\begin{equation}\label{eq:cr-lambda-search}
\norm{A_\lambda^{-1}b} = \lambda/\rho
~~,~~
\lambda \ge -\lmin,
\end{equation}
respectively. However, when $d$ is very large, even a single exact 
evaluation of $\norms{A_\lambda^{-1}b}$ (which requires a direct linear 
system solution)  can become prohibitively expensive.

In this case, a general approach to obtaining approximate solutions is to 
constrain the domain to a linear subspace $\mc{Q}_t \subset \R^d$ of 
dimension $t \ll d$. Let $Q_t \in \R^{d\times t}$ be an orthogonal basis for 
$\mc{Q}_t$ ($Q_t^T Q_t = I$). Finding the global minimizer in $\mc{Q}_t$ 
is equivalent to re-parameterizing $x$ as $x=Q_t \tilde{x}$ and solving for 
$\tilde{x}\in\R^t$, which is also equivalent to solving a $t$-dimensional 
problem instance with $\tilde{A} = Q_t^T A Q_t$ and $\tilde{b} = Q_t^T b$. 
For sufficiently large $d$, the time to solve such problems will be 
dominated by the $t$ matrix-vector products required to construct 
$\tilde{A}$.

In this paper we focus on the choice $\mc{Q}_t = \Krylov$ the Krylov 
subspace of order $t$. This choice offers a significant efficiency boost: we 
can efficiently construct a basis $Q_t$ for which $Q_t^T A Q_t$ is 
tridiagonal, using the Lanczos process, which consists of the 
following recursion, starting with $q_1 = b/\norm{b}, q_0 = 0$,
\begin{equation*}
\alpha_t = q_t^T A q_t
~,~
q'_{t+1} = A q_t - \alpha_t q_t - \beta_t q_{t-1}
~,~
\beta_{t+1} = \norm{q'_{t+1}}
~,~
q_{t+1} = q'_{t+1}/\norm{q'_{t+1}}.
\end{equation*}
The vectors $q_1, \ldots, q_t$ give the columns of $Q_t$ while $\alpha_1, 
\ldots, \alpha_t$ and $\beta_2, \ldots, \beta_t$ respectively give the 
diagonal and off-diagonal elements of the symmetric tridiagonal matrix 
$\tilde{A} = Q_t^T A Q_t$; this makes solving 
equations~\eqref{eq:tr-lambda-search} and~\eqref{eq:cr-lambda-search} 
easy. 
One straightforward approach is to directly compute the factorization 
$\tilde{A}$, which for a symmetric tridiagonal matrix of size $t$ takes 
$O(t\log t)$ time~\cite{CoakleyRo13}. A more efficient approach---and the 
one used in practice---is to iteratively solve systems of the form  
$\tilde{A}_{\lambda}x = z$ and update $\lambda$ using Newton 
steps~\cite{ConnGoTo00,CartisGoTo11}. Every tridiagonal system solution 
can be done in time $O(t)$, and the Newton steps are shown 
in~\cite{ConnGoTo00,CartisGoTo11} to be globally linearly convergent, with 
local quadratic convergence. In our experience less than 20 Newton steps 
generally suffice to reach machine precision, and so the computational cost 
is essentially linear in $t$. 
%
%
%
%
%
It is also possible to avoid keeping $Q_t$ in memory (when $t\cdot d$ 
storage is too demanding) by running the Lanczos process twice, once for 
evaluating $\tilde{x}$ and again to obtain $x=Q_t \tilde{x}$.

The Lanczos process produces the same result as Gram-Schmidt 
orthonormalization of the vectors $\left[b, Ab, \ldots, A^{t-1}b\right]$ but 
uses the 
special structure of that matrix to avoid computing inner products that are 
known in advance to be zero. When run for many iterations, the Lanczos 
process has well-documented numerical stability 
issues~\cite{TrefethenBa97}. However, in 
our setting we usually seek low to moderate accuracy solutions and will 
usually stop at $t < 100$, for which Lanczos is reasonably stable with 
floating point arithmetic even when $d$ is quite large. The application of 
the Lanczos process---which is typically used for eigenvector 
computation---in the context of regularized quadratic optimization is 
sometimes referred to as the generalized Lanczos 
process~\cite{GouldLuRoTo99}.

\subsection{Computing joint Krylov subspace 
solutions}\label{sub:block-lanczos}

To solve equations~\eqref{eq:tr-lambda-search} 
and~\eqref{eq:cr-lambda-search} in subspaces of the form
\begin{equation*}
\Krylov[mt][A, \{v_1, \ldots, v_m\}] \defeq \mathrm{span}\{A^j 
v_i\}_{i\in\{1,\ldots, m\},j\in\{0,\ldots,t-1\}}
\end{equation*}
we may use the block Lanczos method~\cite{CullumDo74,Golub77}, a 
natural generalization of 
the Lanczos method that creates an orthonormal basis for the subspace 
$\Krylov[mt][A, \{v_1, \ldots, v_m\}]$ in which $A$ has a block tridiagonal 
form. Overloading the notation defined above so that now $q_t \in 
\R^{d\times m}$ and $\alpha_t, \beta_t \in \R^{m\times m}$, the block 
Lanczos recursion is given by,
\begin{equation*}
\alpha_t = q_t^T A q_t
~,~
q'_{t+1} = A q_t - q_t \alpha_t -  q_{t-1} \beta_t^T
~,~
(q_{t+1}, \beta_{t+1}) = \mathrm{QR}(q'_{t+1}).
\end{equation*}
where $\mathrm{QR}$ stands for the QR decomposition (i.e.\ if $(q, 
\beta) = \mathrm{QR}(a)$ then $q$ is orthogonal, $\beta$ is upper 
diagonal and $a 
= q\cdot \beta$), and the initial conditions are that $q_1$ is an 
orthonormalized version of $[v_1, \ldots, v_m]$ and $q_0=0$. The matrix 
$\tilde{A} = Q_t^T A Q_t$ is now block tridiagonal, with the diagonal and 
sub-diagonal blocks given by $\{\alpha_i\}_{i\in\{1,\ldots,t\}}$ and 
$\{\beta_i\}_{i\in\{2,\ldots,t\}}$ respectively. Since the $\beta$ matrices 
are upper diagonal, $\tilde{A}$ is a symmetric banded matrix with $m$ 
non-zeros sub-diagonal bands. Such matrix admits fast Cholesky 
decomposition (in time linear in $m^2 t$), and consequently the Newton 
method described above is still efficient.
\section{Polynomial approximation results}\label{sec:polys}

In this section we state (and prove for ease of reference) two classical 
results on uniform polynomial approximation 
(cf.~\cite{KuczynskiWo92,Nemirovski94}) that stand at the core of the 
technical development in this work.

\begin{lemma}\label{lem:cheby-first}
	Let $n \ge 1$ and $0< \alpha \le \beta$, and let $\kappa = 
	\beta/\alpha$. Then
	\begin{equation*}
	\min_{p \in \Polys} \max_{x\in [\alpha,\beta]} | 1 - x p(x) | = 
		\minmaxT \defeq
		2 \left(  \left(\frac{\sqrt{\kappa}+1}{\sqrt{\kappa}-1}\right)^n 
		+ \left(\frac{\sqrt{\kappa}-1}{\sqrt{\kappa}+1}\right)^n \right)^{-1}
	\end{equation*}
	and
	\begin{equation*}
	2\left(e^{2n/(\sqrt{\kappa}-1)}+1\right)^{-1} \le \minmaxT \le 
	2e^{-2n/\sqrt{\kappa}}.
	\end{equation*}
	Moreover, there exist $x_0, x_1, \ldots, x_{n} \in [\alpha,\beta]$ and 
	probability distribution $\pi_0, \pi_1, \ldots \pi_{n}$ such that
	\begin{equation*}
	\min_{p \in \Polys} \sum_{k=0}^{n} \pi_k (1-x_k p(x_k))^2 = 
	[\minmaxT] ^2.
	\end{equation*}
\end{lemma}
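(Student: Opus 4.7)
The plan is to invoke the classical Chebyshev extremal construction, establish optimality through an equioscillation argument, and then deduce the weighted-least-squares identity via a minimax theorem. First I would define the candidate
\[
q^*(x) \defeq T_n\!\left(\frac{\alpha+\beta-2x}{\beta-\alpha}\right) \Big/ T_n\!\left(\frac{\alpha+\beta}{\beta-\alpha}\right),
\]
where $T_n$ is the degree-$n$ Chebyshev polynomial of the first kind. The affine change of variable sends $[\alpha,\beta]$ onto $[-1,1]$ and $0$ to $(\alpha+\beta)/(\beta-\alpha)=(\kappa+1)/(\kappa-1)$, so $q^*(0)=1$ (hence $q^*(x)=1-xp^*(x)$ for some $p^*\in\Polys$) and $\max_{x\in[\alpha,\beta]}|q^*(x)| = 1/T_n\!\left(\tfrac{\kappa+1}{\kappa-1}\right)$. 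To put this value in closed form, substitute $\cosh\theta = (\kappa+1)/(\kappa-1)$, equivalently $e^\theta = (\sqrt{\kappa}+1)/(\sqrt{\kappa}-1)$, into the identity $T_n(\cosh\theta)=\cosh(n\theta)$; this immediately yields the stated expression for $\minmaxT$.

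Writing $\minmaxT = 2/(A+A^{-1})$ with $A = ((\sqrt{\kappa}+1)/(\sqrt{\kappa}-1))^n \ge 1$, the upper bound $\minmaxT \le 2/A \le 2e^{-2n/\sqrt{\kappa}}$ follows from $\log\tfrac{\sqrt{\kappa}+1}{\sqrt{\kappa}-1} = 2\,\mathrm{arctanh}(1/\sqrt{\kappa}) \ge 2/\sqrt{\kappa}$, while the lower bound uses $\log\tfrac{\sqrt{\kappa}+1}{\sqrt{\kappa}-1} \le 2/(\sqrt{\kappa}-1)$ together with $A^{-1}\le 1$ to bound $A+A^{-1} \le e^{2n/(\sqrt{\kappa}-1)}+1$.

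For the matching lower bound on $\min_p \max_x |1-xp(x)|$, I would use the standard alternation argument. The polynomial $q^*$ attains $\pm\minmaxT$ with alternating signs at the $n+1$ images $x_0 < x_1 < \cdots < x_n$ in $[\alpha,\beta]$ of the Chebyshev extrema $\cos(k\pi/n)$. If some $q(x)=1-xp(x)$ with $p\in\Polys$ satisfied $\max_{[\alpha,\beta]}|q| < \minmaxT$, then $q^*-q$ would inherit the alternating sign pattern at these points, forcing at least $n$ interior sign changes; combined with the forced zero at $x=0$ (where $q^*(0)=q(0)=1$), this would give a polynomial of degree at most $n$ with $n+1$ zeros, so $q\equiv q^*$.

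Finally, for the weighted-least-squares identity, I would invoke a minimax theorem on $F(p,\pi) \defeq \sum_k \pi_k(1-x_k p(x_k))^2$, viewed over $p\in\Polys$ and probability measures $\pi$ on $[\alpha,\beta]$. Since $F$ is convex in $p$ and linear in $\pi$, Sion's theorem gives $\min_p \max_\pi F = \max_\pi \min_p F$, and the left-hand side equals $\min_p \max_x (1-xp(x))^2 = [\minmaxT]^2$. The main obstacle is passing from a general optimal measure to one supported on just $n+1$ points: the inner $\min_p$ depends on $\pi$ only through the $n$ linear functionals $\E_\pi[x^{k+1}(1-xp(x))]$ for $k=0,\ldots,n-1$, so Carathéodory's theorem reduces the support of the optimizer to at most $n+1$ points. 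A concrete shortcut is to take $x_0,\ldots,x_n$ to be the alternation points identified in the upper-bound step and solve the $n$ first-order conditions $\sum_k \pi_k q^*(x_k) x_k^j = 0$ plus normalization as a linear system for $\pi_0,\ldots,\pi_n$; the alternating sign pattern of $q^*$ at these points is what guarantees a non-negative solution.
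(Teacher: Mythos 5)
Your construction of the extremal polynomial, the closed form for $\minmaxT$ via $T_n(\cosh\theta)=\cosh(n\theta)$, the exponential bounds, and the equioscillation optimality argument all match the paper's proof essentially line for line (the paper phrases the sign-change count in terms of $p^\star-q$, of degree $\le n-1$ with $n$ roots, rather than your $q^*-q$ with the extra root at $0$; both are fine).

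The one place you genuinely deviate is the final weighted-least-squares identity, and your primary route there has a gap. The inner value $\min_p \E_\pi[(1-xp(x))^2]$ does not depend on $\pi$ through only $n$ linear functionals: the functionals $\E_\pi[x^{k+1}(1-xp(x))]$ you list depend on the unknown $p$, and the actual dependence is through the moments $\E_\pi[x^j]$ for $j=1,\dots,2n$. Carath\'eodory applied to that moment vector only reduces the support to $2n+1$ points, not the $n+1$ required by the statement (and required downstream, since the lower-bound construction lives in dimension $t+1$). Getting to $n+1$ points by this route needs a Tchakaloff/Gaussian-quadrature-type argument, not plain Carath\'eodory. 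Your ``concrete shortcut'' is the right fix and is exactly what the paper does, but the justification is not the unproved claim that the alternating signs force a nonnegative solution of a linear system. Rather: observe that your equioscillation argument certifies optimality of $p^\star$ even when the maximum is restricted to the $n+1$ alternation points $x_0,\dots,x_n$ (the sign-change count uses only those points), so $\min_{p\in\Polys}\max_{0\le k\le n}(1-x_kp(x_k))^2=[\minmaxT]^2$ already holds on that finite set. Writing the max over $k$ as a max over the simplex $\Delta_{n+1}$ and applying finite-dimensional von Neumann minimax (the objective is convex in the coefficients of $p$ and linear in $\pi$) then swaps min and max and hands you the distribution $\pi$ directly, with support automatically contained in $\{x_0,\dots,x_n\}$. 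With that replacement your argument is complete.
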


\begin{proof}
	Let
	\begin{equation*}
	T_n(x) = \begin{cases}
	\cos(n\arccos(x)) & |x| \le 1 \\
	\half \left(  (x + \sqrt{x^2-1})^n + (x - \sqrt{x^2-1})^n \right) & |x| 
	\ge 1
	\end{cases}
	\end{equation*}
	denote the order $n$ Chebyshev polynomial of the first kind. We 
	claim that $p^\star\in\Polys$ that solves the minimax problem $\min_{p 
	\in 
	\Polys} \max_{x\in [\alpha,\beta]} | 1 - x p(x) |$ is given by
	\begin{equation*}
	1-xp^\star(x) = \minmaxT\cdot
	 T_n \left( 
	\frac{\kappa+1-2x/\alpha}
	{\kappa-1} \right),
	\end{equation*}
	where $\minmaxT = \left[ T_n \left( \frac{\kappa+1}{\kappa-1} 
	\right)\right]^{-1}$ guarantees that the RHS has value 1 at $x=0$ and 
	therefore $p^\star$ is well defined. Since clearly $|T_n(y)|\le 1$ for every 
	$y\in[-1,1]$, we have that $\max_{x\in [\alpha,\beta]} | 1 - x p^\star(x) 
	| = \minmaxT$. 
	
	We argue that $p^\star$ is optimal using the classical alternating signs 
	argument, sometimes also referred to as Chebyshev's theorem. 
	First, note that $T_n(y)$ has $n+1$ extrema in $[-1,1]$ (at 
	$y_k=\cos(k\pi/n)$ for $k=0,\ldots,n$) and that their values alternate 
	between $-1$ and $1$ (\ie $T_n(y_k)=(-1)^k$). Therefore, there exist 
	$n+1$ distinct points $x_0, x_1, \ldots, x_{n} \in [\alpha,\beta]$ for 
	which $1-x_i p^\star(x_k) = (-1)^{k}\minmaxT$. Let $q\in\Polys$ 
	satisfy $\max_{x\in [\alpha,\beta]} | 1 - x q(x) 
	| \le \minmaxT$. Then, 
	\[p^\star(x_k) - q(x_k) = \frac{[1-x_k q(x_k)] 
	-[1-x_k p^\star(x_k)]}{x_k} \]
	must be non-positive for 
	even $k$ and non-negative for odd $k$, and therefore $p^\star- q$ 
	must 
	have at least $n$ roots  
	in $[\alpha,\beta]$. However, $p^\star- q$ is a polynomial of degree at 
	most $n-1$ and can have $n$ roots only if it is identically 0, so we have 
	that $q=p^\star$, proving that  $p^\star$ is the unique 
	solution of the minimax problem. 
	
	To see the upper and lower bounds on $\minmaxT$, note that 
	$\minmaxT 
	= 1/\cosh(n \log( 1 + \frac{2}{\sqrt{\kappa}-1}))$, that $\half e^{|y|} \le 
	\cosh(y) \le \half(e^{|y|} +1)$, and that 
	\begin{equation*}
	\frac{2}{z} \le \log\left( 1 + \frac{2}{z-1}\right) \le 
	\frac{2}{z-1}
	\end{equation*}
	for all $z>1$, 
	where the lower bound above can seen by comparing derivatives.
	
	To see the final part of the lemma, let $x_0, x_1, \ldots, x_{n} \in 
	[\alpha,\beta]$ be the points constructed in the optimality argument 
	above, and note that this argument continues to hold if the inner 
	maximization is restricted to these points. Therefore,
	\begin{equation*}
	\min_{p \in \Polys}\max_{0\le k \le n} (1-x_k p(x_k))^2
	= \left[ \min_{p \in \Polys}\max_{0\le k \le n} |1-x_k p(x_k)|  \right]^2 
	= [\minmaxT]^2.
	\end{equation*}
	Letting $\Delta_{n+1}$ denote the probability simplex with $n+1$ 
	variables, we may write
	\begin{equation*}
	\max_{0\le k \le n} (1-x_k p(x_k))^2 = 
	\max_{\mu\in\Delta_{n+1}} 
	\sum_{k=0}^{n} \mu_k (1-x_k p(x_k))^2.
	\end{equation*}
	Finally, noting that the objective $\sum_{k=0}^{n} \mu_k (1-x_k 
	p(x_k))^2$ is linear (and hence concave) in $\mu$ and convex in (the 
	coefficients of) $p$, we 
	may use Von-Neumann's lemma and swap the $\min$ and $\max$ 
	above, writing 
	\begin{equation*}
	\max_{\mu\in\Delta_{n+1}} \min_{p \in \Polys} \sum_{k=0}^{n} \mu_k 
	(1-x_k p(x_k))^2
	=
	\min_{p \in \Polys}\max_{\mu\in\Delta_{n+1}}  \sum_{k=0}^{n} \mu_k 
	(1-x_k p(x_k))^2
	= [\minmaxT]^2.
	\end{equation*}
	Letting $\pi$ denote the distribution attaining the outer maximum, we 
	get the desired result. We remark in passing that $\pi$ may be 
	constructed explicitly using the orthogonality principle of least squares 
	estimation and orthogonality relations of Chebyshev polynomials. 
\end{proof}

\begin{lemma}\label{lem:cheby-second}
	Let $n \ge 1$ and $0< \alpha \le \beta$, let $\kappa = 
	\beta/\alpha$ and define $w(x)\defeq \sqrt{x-\alpha}$.  
	Then
	\begin{equation*}
	\min_{p \in \Polys} \max_{x\in [\alpha,\beta]} 
	{w(x)}| 1 - x 	p(x) 
	| = 
	\minmaxU \defeq
	2\sqrt{\alpha} \left(  
	\left(\frac{\sqrt{\kappa}+1}{\sqrt{\kappa}-1}\right)^{n + \half} 
	- \left(\frac{\sqrt{\kappa}-1}{\sqrt{\kappa}+1}\right)^{n + \half}  
	\right)^{-1}
	\end{equation*}
	and
	\begin{equation*}
	2\sqrt{\alpha} 
	\left(e^{2(2n+1)/(\sqrt{\kappa}-1)} - 1\right)^{-\half}
	\le \minmaxU \le 
	2\sqrt{\alpha} 
	\left(e^{2(2n+1)/\sqrt{\kappa}} - 2\right)^{-\half}.
	\end{equation*}
	Moreover, there exist $x_0, x_1, \ldots, x_{n} \in [\alpha,\beta]$ and 
	probability distribution $\pi_0, \pi_1, \ldots \pi_{n}$ such that
	\begin{equation*}
	\min_{p \in \Polys} \sum_{k=0}^{n} \pi_k w^2(x_k)(1-x_k p(x_k))^2 = 
	[\minmaxU] ^2.
	\end{equation*}
\end{lemma}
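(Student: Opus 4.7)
The proof closely parallels that of Lemma~\ref{lem:cheby-first}, but with the weight $w(x) = \sqrt{x - \alpha}$ vanishing at one endpoint of the interval leading naturally to Chebyshev polynomials of the \emph{fourth kind} $W_n$, characterized by $W_n(\cos\theta) \sin(\theta/2) = \sin((n+1/2)\theta)$.

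I would first perform the affine change of variables $y \defeq \frac{(\alpha + \beta) - 2x}{\beta - \alpha}$, which maps $[\alpha, \beta]$ onto $[-1, 1]$ with $x = \alpha \mapsto y = 1$, $x = \beta \mapsto y = -1$, and $x = 0 \mapsto \bar y \defeq (\kappa+1)/(\kappa-1) > 1$. Writing $r(y) \defeq 1 - x(y)p(x(y))$ sets up a bijection between the class $\{1 - xp(x) : p \in \Polys[n]\}$ and the polynomials of degree at most $n$ in $y$ satisfying $r(\bar y) = 1$; since $w(x) = \sqrt{(\beta - \alpha)/2}\cdot\sqrt{1 - y}$, the problem reduces to
\[
\min_{\deg r \le n,\, r(\bar y) = 1}\; \max_{y \in [-1, 1]} \sqrt{(\beta - \alpha)/2}\cdot\sqrt{1 - y}\,|r(y)|.
\]
The natural candidate is $r^\star(y) \defeq W_n(y)/W_n(\bar y)$, for which $\sqrt{(1 - y)/2}\, r^\star(y) = \sin((n + 1/2)\theta)/W_n(\bar y)$ attains the values $(-1)^k/W_n(\bar y)$ at the $n+1$ extremal points $y_k = \cos((2k+1)\pi/(2n+1))$, $k = 0, \ldots, n$, inside $[-1, 1]$.

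Optimality of $r^\star$ follows by the same alternating-sign argument as in Lemma~\ref{lem:cheby-first}: since $w(y_k) > 0$ at every extremum, any strictly-better competitor $r$ would force $r^\star - r$ to change sign between each pair of consecutive $y_k$, producing $n$ zeros in $[-1, 1]$; combined with the extra zero at $\bar y \notin [-1, 1]$ forced by $r(\bar y) = r^\star(\bar y) = 1$, this gives $n + 1$ zeros, contradicting $\deg(r^\star - r) \le n$ unless $r \equiv r^\star$. The ``moreover'' saddle-point statement then follows verbatim from the Von Neumann minimax argument used in Lemma~\ref{lem:cheby-first}, applied to the same $n+1$ extremal points.

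For the explicit closed form, I would evaluate $W_n(\bar y)$ via the hyperbolic extension $W_n(\cosh\xi) = \sinh((n + 1/2)\xi)/\sinh(\xi/2)$. Letting $u = e^{\xi/2}$ and solving $u^2 + u^{-2} = 2\bar y$ yields $u^2 = (\sqrt\kappa + 1)/(\sqrt\kappa - 1)$, whence $W_n(\bar y) = (u^{2n+1} - u^{-(2n+1)})/(u - u^{-1})$. The identities $u - u^{-1} = 2/\sqrt{\kappa - 1}$ and $\sqrt{(\beta-\alpha)/2}\cdot\sqrt{2} = \sqrt{\alpha(\kappa - 1)}$ then combine to give the claimed form $\minmaxU = 2\sqrt{\alpha}/(u^{2n+1} - u^{-(2n+1)})$. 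The exponential bounds follow by applying the elementary inequality $2/z \le \log(1 + 2/(z - 1)) \le 2/(z-1)$ from Lemma~\ref{lem:cheby-first} with $z = \sqrt\kappa$ to $2\log u = \log((\sqrt\kappa+1)/(\sqrt\kappa-1))$, and sandwiching $(u^{2n+1} - u^{-(2n+1)})^2 = u^{2(2n+1)} - 2 + u^{-2(2n+1)}$ using $u^{\pm 2(2n+1)} = e^{\pm(2n+1)\cdot 2\log u}$. The main obstacle is identifying the correct polynomial: the weight vanishing at exactly one endpoint forces a single boundary zero rather than interior equioscillation only, and one has to verify that $W_n$ (rather than $T_n$ or $U_n$) contributes precisely the right number of interior extrema so that the constraint $r(\bar y) = 1$ supplies the final zero needed to close the alternating-sign argument.
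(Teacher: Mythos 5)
Your proof is correct and takes essentially the same route as the paper's: the paper's extremal polynomial is $U_{2n}\bigl(\sqrt{(\kappa-x/\alpha)/(\kappa-1)}\bigr)$ (second-kind Chebyshev composed with a square root), which via the identity $W_n(2y^2-1)=U_{2n}(y)$ is exactly your fourth-kind polynomial $W_n$ in the affine variable $y$. The equioscillation points, the alternating-sign optimality argument (the paper closes it by dividing by $x_k$ rather than invoking the extra zero at $\bar y$, but these are the same count), the closed form, and the exponential bounds via $\sinh$ all coincide with yours.
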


\begin{proof}
	Let
	\begin{equation*}
	U_n(x) = \begin{cases}
	\frac{1}{\sqrt{1-x^2}}\sin((n+1)\arccos(x)) & |x| \le 1 \\
	 \frac{1}{2\sqrt{x^2-1}}\left( (x + \sqrt{x^2-1})^{n+1} - (x - 
	 \sqrt{x^2-1})^{n+1}\right)
	  & |x| \ge 1
	\end{cases}
	\end{equation*}
	denote the order $n$ Chebyshev polynomial of the second kind. We 
	claim that $p^\star\in\Polys$ that solves the minimax problem $\min_{p 
		\in 
		\Polys} \max_{x\in [\alpha,\beta]} (x-\alpha)^{1/2}| 1 - x p(x) |$ is 
		given by
	\begin{equation*}
	1-xp^\star(x) = \frac{\minmaxU}{w(\beta)}\cdot
	U_{2n} \left( \sqrt{
	\frac{\kappa-x/\alpha}
	{\kappa-1} }\right),
	\end{equation*}
	where $\minmaxU = w(\beta)\left[ U_{2n} \left( 
	\sqrt{\frac{\kappa}{\kappa-1}} 
	\right)\right]^{-1}$ guarantees that the RHS has value 1 at $x=0$ and 
	therefore $p^\star$ is well defined (note that $U_{2n}(\cdot)$ is an even 
	polynomial and therefore $U_{2n}(\sqrt\cdot)$ is a polynomial of degree 
	$n$). For $x\in[\alpha,\beta]$, we have by the definition of $p^*$ and 
	the expression for $U_{2n}$,
	\begin{equation*}
	w(x)(1-xp^\star(x)) = \minmaxU \cdot 
	\sin\left( (2n+1)\arccos\left( \sqrt{\frac{\kappa-x/\alpha}{\kappa-1} 
	}\right)\right).
	\end{equation*}
	Therefore, we have that $w(x)|1-xp^\star(x)| \le \minmaxU$ for every 
	$x\in[\alpha,\beta]$, and moreover we have that $w(x_k)(1-x_k 
	p^\star(x_k)) = (-1)^k \cdot \minmaxU$, for the points $x_0, \ldots x_n 
	\in 
	[\alpha, \beta]$ satisfying
	\begin{equation*}
	\sqrt{\frac{\kappa-x_k/\alpha}{\kappa-1} }
	= \cos \left(\frac{\pi}{2}\cdot \frac{2k+1}{2n+1}\right).
	\end{equation*}
	Hence, the alternating signs argument from the proof of 
	Lemma~\ref{lem:cheby-first} holds here as well and we have that 
	$p^\star$ is optimal and that $\min_{p \in \Polys} \max_{x\in 
	[\alpha,\beta]} 
	{w(x)}| 1 - x 	p(x) | = \minmaxU$.

	To see the upper and lower bounds on $\minmaxU$, note that 
	$\minmaxU
	= \sqrt{\alpha}/\sinh((n+\half) \log( 1 + \frac{2}{\sqrt{\kappa}-1}))$, 
	that for $y\ge 0$, $\sinh(y) =\frac{1}{\sqrt{2}}\sqrt{\cosh(2y)-1}$ gives 
	$\half \sqrt{e^{2y}-2} \le 
	\sinh(y) \le \half \sqrt{e^{2y}-1}$, and that  (as in 
	Lemma~\ref{lem:cheby-first}) 
	$\frac{2}{z} \le \log\left( 1 + \frac{2}{z-1}\right) \le 
	\frac{2}{z-1}$.

	The final part of the lemma follows exactly as in 
	Lemma~\ref{lem:cheby-first}. 
\end{proof}

\section{Proofs from Section~\ref{sec:upper}}\label{sec:ub-proofs}

\subsection{Proof of auxiliary lemmas}\label{sec:prelim-proofs}

\lemLinCheby*

\begin{proof}
	This is an immediate consequence of Lemma~\ref{lem:cheby-first}, as
	\begin{equation*}
	\min_{p\in\Polys[t]}\max_{\alpha I \preceq M \preceq \beta I}\opnorm{I 
	- Mp(M)}  = 
	\min_{p\in\Polys[t]}\max_{\lambda\in [\alpha,\beta]}\left|1-\lambda 
	\cdot
	p(\lambda)\right| 
	=  \minmaxT[t].
	\end{equation*}
\end{proof}

\lemAGD*

\begin{proof}
	Let $g:\R^d \to \R$ be convex with $L$-Lipschitz gradient and let 
	$Q\subseteq\R^d$ be a convex set containing the point $0$. Consider 
	Nesterov's 
	accelerated gradient method for minimization of $g$, which 
	comprises the following recursion \cite[Scheme 
	(2.2.17)]{Nesterov04},
	\begin{gather*}
	x_{k+1} = \min_{x\in Q}\left\{ x^T \grad g(y_k) + \frac{L}{2}\norm{x 
		-y_k}^2 \right\} = \Pi_Q\left( y_k - \frac{1}{L}\grad g(y_k) \right)
	\\
	\alpha_{k+1}^2/(1-\alpha_{k+1}) = \alpha_k^2 \Rightarrow 
	\alpha_{k+1} = -\frac{\alpha_k^2}{2} 
	+ \frac{\alpha_k^2}{2}\sqrt{ 1+\frac{4}{\alpha_k^2}}
	\\
	y_{k+1} = x_{k+1} + \alpha_{k+1}(\alpha_k^{-1}-1)
	(x_{k+1} - x_k),
	\end{gather*}
	where $\Pi_Q(\cdot)$ is the Euclidean projection to $Q$. Letting 
	$\alpha_0=1$ and $y_0  = x_0=0$, and letting $x^\star$ denote any  
	minimizer of $g$ in $Q$, the analysis of~\citet[Corollary 2(b)]{Tseng08} 
	gives\footnote{ translating to the notation of~\cite{Tseng08}, take 
	$\phi(x,v) = g(x)$ and $P(x)$ to be the indicator of $Q$, 
	so that $q^P(\cdot) = g(x^\star)$, note that $\theta_k$ ($\alpha_k$ in 
	our notation) satisfies $\theta_k \le  2/(2+k)$. We discuss alternative 
	references for this result after the proof.},
	\begin{equation}\label{eq:tseng-bound}
	g(x_t) - g(x^\star) \le \frac{4L\max_{z\in Q}\norm{z}^2}{(t+1)^2}.
	\end{equation}
	
	Taking $g=\f[M,v]$ and $Q=B_r=\{x\mid\norm{x}\le r\}$, we note that 
	$\f[M,v]$ is convex with 
	$L\defeq\lambda_{\max}(M)$-Lipschitz gradient, and that the projection 
	step guarantees that $\norm{x_t}\le r$ for every $t$.  Therefore, to 
	establish the 	lemma it remains only to argue that $x_t$ as defined 
	above is in 
	$\Krylov[t][M,v]$; we shall see this by simple induction, whose basis is 
	$y_0, x_0 \in \Krylov[0][M,v] = \{0\}$. Assume now that $y_k, x_k \in 
	\Krylov[k][M,v]$ for some $k\ge0$. This implies
	\begin{equation*}
	y_k - \frac{1}{L}\grad g(y_k) = y_k - \frac{1}{L}A y_k - \frac{1}{L} v \in 
	\Krylov[k+1][M,v].
	\end{equation*}
	Further, note that projection to the Euclidean ball $B_r$ is simply scaling:
	\begin{equation*}
	\Pi_{Q}(z) = \Pi_{B_r}(z) = \frac{r}{\max\{r, \norm{z}\}} \cdot z,
	\end{equation*}
	and therefore $x_{k+1}\in \Krylov[k+1][M,v]$. Finally, $y_{k+1}$ is 
	simply a linear combination of $x_{k+1}$ and $x_k$ and therefore is 
	also in $\Krylov[k+1][M,v]$, concluding the induction and the proof.
\end{proof}

A bound similar to~\eqref{eq:tseng-bound} appears in Nesterov's earlier 
analysis~\cite[Theorem~2.2.3]{Nesterov04}, but with an the additional 
factor proportional to $g(0)-g(x\opt)$ which is not immediately upper 
bounded by $\half L \max_{z\in Q}\norms{z}^2$ due to the constraint 
$z\in Q$. 
The bound~\eqref{eq:tseng-bound} also appears in later 
work of~\citet{AllenOr17}.

\lemEigen*

\newcommand{\err}[1][t]{\mathsf{err}_{#1}}

\begin{proof}
	Let $\lambda\parind{1} \le \lambda\parind{2} \le  \cdots \le  
	\lambda\parind{d}$ denote the eigenvalues of $M$ and let $u_1, u_2, 
	\ldots, u_d$ denote their corresponding (orthonormal) eigenvectors. By 
	our assumption $\lambda\parind{1} = 0$ and we have also 
	$\lambda\parind{d} = \opnorm{M}$. We let 
	\begin{equation*}
	v\parind{i} \defeq u_i^T v
	\end{equation*}
	denote the component of $v$ in the eigenbasis of $M$. Define 
	\begin{equation*}
	\err \defeq 
	\min_{p\in\Polys[t]} \frac{ (p(M)v)^T M p(M)v }{\norm{p(M)v}^2 }
	= \min_{p\in\Polys[t]} \frac{ \sum_{i=1}^d v\parind{i}^2 
		p^2(\lambda\parind{i}) \lambda\parind{i}}
	{\sum_{i=1}^d v\parind{i}^2 p^2(\lambda\parind{i})},
	\end{equation*}
	and let $q\in\Polys[t]$ attain the minimum above. Setting $z_t = 
	q(M)v/\norm{q(M)v}$, we see that 
	\begin{equation*}
	\err = z_t^T M z_t = \frac{ \sum_{i=1}^d v\parind{i}^2 
		q^2(\lambda\parind{i}) \lambda\parind{i}}
	{\sum_{i=1}^d v\parind{i}^2 q^2(\lambda\parind{i})},
	\end{equation*}
	and so our 
	proof comprises of bounding $\err$ from above. 
	
	We invoke Lemma~\ref{lem:cheby-second} with $n=t-1$, $\alpha=\err$ 
	and $\beta=\lambda\parind{d}=\opnorm{M}$; let $\tilde{q}(x) = 
	1-xp^\star(x)\in\Polys[t]$ be the polynomial for which the Lemma 
	guarantees
	\begin{equation*}
	\max_{x\in [\err, \lambda\parind{d}]} (x-\err)^{1/2}|\tilde{q}(x)| = 
	\minmaxU[t-1].
	\end{equation*}
	By the optimality of $q$, we have that
	\begin{equation*}
	\err \le \frac{ \sum_{i=1}^d v\parind{i}^2 
		\tilde{q}^2(\lambda\parind{i}) \lambda\parind{i}}
	{\sum_{i=1}^d v\parind{i}^2 \tilde{q}^2(\lambda\parind{i})}.
	\end{equation*}
	Rearranging and noting that $\tilde{q}(\lambda\parind{1}) = \tilde{q}(0) = 
	1$, we obtain
	\begin{equation*}
	\err \le \sum_{i=2}^d \frac{v\parind{i}^2}{v\parind{1}^2}
	(\lambda\parind{i} - \err)\tilde{q}^2(\lambda\parind{i})
	\le \frac{\norm{v}^2-v\parind{1}^2}{v\parind{1}^2} \max_{\lambda\in 
		[\err, 
		\lambda\parind{d}]} (\lambda - \err) \tilde{q}^2(\lambda)
	= \left(\frac{\norm{v}^2}{v\parind{1}^2} -1\right)[\minmaxU[t-1]]^2.
	\end{equation*} 
	Lemma~\ref{lem:cheby-second} provides the bound
	\begin{equation*}
	[\minmaxU[t-1]]^2 \le \frac{4\err}{e^{2(2t-1)\sqrt{\err/\opnorm{M}}}-2}.
	\end{equation*}
	Substituting the upper bound into $\err \le 
	\big(\frac{\norm{v}^2}{v\parind{1}^2}-1\big)[\minmaxU[t-1]]^2$ and 
	rearranging 
	gives the result.
\end{proof}

\subsection{Proof of 
	Corollary~\ref{cor:tr-rand}}\label{sec:tr-rand-proof}
\corRand*
\begin{proof}
	Let $\pstr \in \argmin_{x\in\Krylov[t][A,\pb], \norm{x}\le R} 
	\f[A,\pb](x)$ be a solution to the perturbed problem. Since $v$ is a unit 
	vector, for any feasible $x$ we have
	\begin{align}\label{eq:tr-rand-pointwise-bound}
	\f(x) - \f(\str) & = \f[A,\pb](x) - \f[A,\pb](\str) + \sigma\cdot v^T (\str 
	- x)
	\le \f[A,\pb](x) - \f[A,\pb](\str) + 2\sigma R  \nonumber \\ &
	\le  \f[A,\pb](x) - \f[A,\pb](\pstr) + 2\sigma R,
	\end{align}
	and so it suffices to argue about the perturbed optimality gap 
	$\f[A,\pb](\pitertr_t) - \f[A,\pb](\str)$. 
	
	Applying the bound~\eqref{eq:tr-sublin-time} on the perturbed problem 
	gives us
	\begin{equation}\label{eq:tr-rand-pert-sublin-bound}
	\f[A,\pb](\pitertr_t) - \f[A,\pb](\pstr) \le 
	\frac{(\lmax - 
		\lmin)R^2}{(t-\half)^2} 
	\left[ 
	4 + \frac{\I_{\{\lmin < 0\}}}{2}
	\log^2\left(2\frac{\norms{\pb}}{|u_{\min}^T \pb|}\right)
	\right],
	\end{equation}
	and a simple argument on the density of $u_{\min}^T \pb$ 
	(cf.~\cite[Lemma 4.6]{CarmonDu16}) shows that
	\begin{equation}\label{eq:tr-rand-b1-bound}
	|u_{\min}^T \pb| \ge \frac{\sigma \cdot \delta}{\sqrt{d}}
	~~\mbox{with probability at least $1-\delta$}.
	\end{equation}
	Combining the bounds~\eqref{eq:tr-rand-pointwise-bound}, 
	\eqref{eq:tr-rand-pert-sublin-bound} and~\eqref{eq:tr-rand-b1-bound} 
	gives the result~\eqref{eq:tr-pert-sublin-time}.
\end{proof}

\section{Proof of lower bounds}\label{sec:lb-proof}

In what follows, we break Theorem~\ref{thm:lb} into two parts, 
one for the linear convergence lower bound~\eqref{eq:cr-lb-lin} and one for 
the sublinear lower bounds~\eqref{eq:cr-lb-sub-ncvx} 
and~\eqref{eq:cr-lb-sub-cvx}. We restate each sub-theorem in a way that 
clearly shows our control over problem-dependent parameters when 
constructing the hard problem instances. In our proofs we will make use of 
the following expression for the optimality gap in the cubic-regularization 
problem,
\begin{equation}\label{eq:fx-s-expression}
\fcu\left(x\right)-\fcu\left(\scu\right)=
\frac{1}{2}(x-\scu)^T\Acu (x-\scu) 
+\frac{\rho}{6}\left(\| \scu\| -\| x\| \right)^{2}\left(\| \scu\| +2\| x\| 
\right),
\end{equation}
where $\Acu = A + \rs I$.

\subsection{Proof of linear convergence lower bound}

\begin{customthm}{\ref{thm:lb}, part I}
	Let $\lmin, \lmax, \ltr, \Delta \in \R$ such that $\lmin \le \lmax$, $\ltr 
	> \max\{-\min,0\}$ and $R, \Delta>0$. For every $t \ge 1$ and every 
	$d>t$ there exists $A\in\R^{d\times d}$, $b\in\R^d$ and $\rho > 0$ 
	such that 
	\begin{itemize}
		\item all eigenvalues of $A$ are in $[\lmin, \lmax]$, 
		\item the solution $\scu = \argmin_{x\in\R^d}\fcu(x)$
		satisfies 
		$\rs = \ltr$,
		\item $\fcu(0) - \fcu(\scu) = \Delta$, and
	\end{itemize}
	\begin{equation*}
	\fcu(s) - \fcu(\scu) > 	 
	\left(1+\frac{\rs}{3(\rs + \lmin)} \right)^{-1} 
	\left[\fcu(0) - \fcu(\scu) \right]
	\exp\left\{
	-\frac{4t}{\sqrt{\frac{\rs + \lmax}{\rs +\lmin}}-1}
	\right\}.
	\end{equation*}
	for every $s\in\Krylov$.
\end{customthm}

\begin{proof}
	From Lemma~\ref{lem:cheby-first} with $\alpha = \ltr + \lmin$, 
	$\beta = \ltr + \lmax$ and $n=t$, we have that there exist $\xi_0, 
	\ldots, 
	\xi_t \in [\alpha, \beta]$ and probability distribution $\pi_0, \ldots, 
	\pi_{t}$ such that
	\begin{equation*}
	\min_{p \in \Polys[t]} \sum_{k=0}^{t} \pi_k (1-\xi_k p(\xi_k))^2 \ge 
	e^{-4t/(\sqrt{\kappa}-1)},
	\end{equation*}
	where $\kappa = \beta/\alpha = (\lmax+\ltr)/(\lmin+\ltr)$. We let $\xi$ 
	and $\sqrt{\pi}$ denote vectors with entries $\xi_0, \ldots, \xi_{t}$ and 
	$\sqrt{\pi_0}, \ldots, \sqrt{\pi_t}$ respectively.
	
	To construct the problem instance $(A,b,\rho)$ we assume without loss 
	of generality $d=t+1$, as higher dimensional instances can be obtained 
	by zero-padding a $(t+1)$-dimensional construction. We set
	\begin{equation*}
	A = \diag( \xi - \ltr )
	,~
	b = \mu \Atr^{1/2}\sqrt{\pi}
	~\mbox{and}~
	\rho =\ltr/ \norm{\Atr^{-1}b},
	\end{equation*}
	where we will choose $\mu > 0$ to set the value of 
	$\fcu(0)-\fcu(\scu)$. 
	First, we note that for any value of $\mu$ our choice of $\rho$ 
	guarantees that  $\norm{\Atr^{-1}b} = \ltr/\rho$, making $\scu = 
	-\Atr^{-1}b$ the unique global minimizer of $\fcu$. We therefore have 
	by equation~\eqref{eq:fx-s-expression}
	\begin{equation*}
	\fcu(0)-\fcu(\scu) = \half \scu^T \Atr \scu + \frac{\rs}{6}\norm{\scu}^2
	= \frac{\mu^2}{2}\left( 1 + \frac{\ltr}{3} \sqrt{\pi}^T \Atr^{-1} 
	\sqrt{\pi} \right),
	\end{equation*}
	so for every $\Delta > 0$ there is $\mu$ for which $\fcu(0)-\fcu(\scu) = 
	\Delta$. Noting that $\sqrt{\pi}^T \Atr^{-1} \sqrt{\pi} \le (\ltr + 
	\lmin)^{-1}\norm{\sqrt{\pi}}^2 = (\ltr + \lmin)^{-1}$, we also have
	\begin{equation*}
	\frac{\mu^2}{2} \ge \Delta \left(1+\frac{\ltr}{3(\ltr + \lmin)} \right)^{-1} .
	\end{equation*}
	
	Now, every $s\in\Krylov$ is of the form $s=-p(\Atr)b$ for 
	$p\in\Polys[t]$, and using 
	equation~\eqref{eq:fx-s-expression} again we have
	\begin{align*}
	\fcu(s)-\fcu(\scu) &\ge \half \norm{\Atr^{1/2}(s-\scu)}^2
	\stackrel{(a)}{=} \half\norm{(I-\Atr p(\Atr))\Atr^{-1/2}b}^2 \\ &
	\stackrel{(b)}{=} \frac{\mu^2}{2}\sum_{k=0}^{n} \pi_k (1-\xi_k 
	p(\xi_k))^2 
	\stackrel{(c)}{\ge} 
	\frac{\mu^2}{2} e^{-4t/(\sqrt{\kappa}-1)},
	\end{align*}
	where in $(a)$ we substituted $s=-p(\Atr)b$ and $\scu=-\Atr^{-1}b$, in 
	$(b)$ we used our construction of $A$ and $b$, and in $(c)$ we used the 
	guarantee from Lemma~\ref{lem:cheby-first}. The result follows from 
	substituting our lower bound on $\mu^2$ and recalling that $\ltr = \rs$.
\end{proof}

\subsection{A lower bound for finding eigenvectors}
The ``non-convex'' lower bound is in its heart a statement about the 
difficulty of approximating an extremal eigenvector in a Krylov subspace, 
which we state explicitly here. The proof of the lemma consists of applying 
``in reverse'' the same polynomial approximation result 
(Lemma~\ref{lem:cheby-second}) that~\citet{KuczynskiWo92} use for 
proving upper bounds on finding eigenvector with the Lanczos method 
(which we state as Lemma~\ref{lem:eigenvec-tight}).

\begin{restatable}[Finding 
	eigenvectors: lower bound]{lemma}{lemEigenLB}\label{lem:eigenvec-lb}
	For every $d>0$, vector $v\in \R^d$, unit vector $u\in\R^d$ and $t<d$, 
	there exists matrix 
	$M\in\R^{d\times d}$ such that $M \succeq 0$, $M u = 0$, and for 
	every $z \in \Krylov[t][M,v]$, 
	\begin{equation*}
	\frac{z_t^T M z_t}{\opnorm{M}\norm{z_t}^2} \ge 
	\min\left\{\frac{1}{4}, 
	\frac{1}{64(t-\half)^2}
	\log^2\left(-3+4\frac{\norm{v}^2}{(u^T v)^2}\right)\right\}.
	\end{equation*}
\end{restatable}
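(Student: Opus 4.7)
My plan is to reverse the polynomial-approximation argument behind Lemma~\ref{lem:eigenvec-tight}: build $M$ whose nonzero spectrum consists of the extremal nodes identified by Lemma~\ref{lem:cheby-second}, with the off-$u$ coordinates of $v$ set equal to the square roots of the corresponding optimal weights. We construct $M$ with $\opnorm{M}=1$; the general statement follows by scaling. Write $v_1=u^Tv$, $c^2=\norm{v}^2-v_1^2$, and let $L=\log(1+4c^2/v_1^2)=\log(-3+4\norm{v}^2/v_1^2)$. Set the target $\nu:=\min\{\tfrac14,\;L^2/(64(t-\tfrac12)^2)\}$, and apply Lemma~\ref{lem:cheby-second} with $n=t-1$, $\alpha=\nu$, $\beta=1$ to obtain nodes $x_0,\ldots,x_{t-1}\in[\nu,1]$ with $x_{t-1}=1$ and a probability distribution $\pi_0,\ldots,\pi_{t-1}$ satisfying
\[
\min_{q\in\Polys[t-1]}\sum_{k=0}^{t-1}\pi_k(x_k-\nu)(1-x_kq(x_k))^2\;\ge\;\frac{4\nu}{e^{2(2t-1)/(\sqrt{1/\nu}-1)}-1}.
\]

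Since $d>t$, I pick orthonormal $u_1=u,u_2,\ldots,u_{t+1}\in\R^d$ with the basis arranged (via Gram--Schmidt on $v-v_1u$ together with auxiliary directions) so that $u_{k+2}^Tv=c\sqrt{\pi_k}$ for $k=0,\ldots,t-1$; this is consistent because $\sum_k c^2\pi_k=c^2=\norm{v-v_1u}^2$. Define $M=\sum_{k=0}^{t-1}x_k\,u_{k+2}u_{k+2}^T$, so that $Mu=0$, $M\succeq 0$, and $\opnorm{M}=x_{t-1}=1$. Any $z\in\Krylov[t][M,v]$ has the form $z=p(M)v$ for some $p\in\Polys[t]$, and since $Mu=0$ the Rayleigh quotient becomes
\[
\frac{z^TMz}{\opnorm{M}\norm{z}^2}=\frac{c^2\sum_k\pi_kx_kp^2(x_k)}{v_1^2p^2(0)+c^2\sum_k\pi_kp^2(x_k)}.
\]
If $p(0)=0$ this is a weighted average of $x_k\ge\nu$ and we are done. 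Otherwise, normalize $p(0)=1$ and write $p(x)=1-xq(x)$ with $q\in\Polys[t-1]$; the desired inequality $\ge\nu$ rearranges to $c^2\sum_k\pi_k(x_k-\nu)(1-x_kq(x_k))^2\ge\nu v_1^2$, which by the displayed minimum from Lemma~\ref{lem:cheby-second} is implied by $\sqrt{1/\nu}-1\ge 2(2t-1)/L$, i.e., $\nu\le L^2/(L+2(2t-1))^2$.

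The main technical step is a short case split verifying that our choice of $\nu$ meets this sufficient condition: when $L\le 2(2t-1)$ one has $L+2(2t-1)\le 4(2t-1)$, whence $L^2/(L+2(2t-1))^2\ge L^2/(16(2t-1)^2)=L^2/(64(t-\tfrac12)^2)$, and when $L>2(2t-1)$ one has $L+2(2t-1)<2L$, so $L^2/(L+2(2t-1))^2>\tfrac14$. The most delicate bookkeeping lies in picking $n=t-1$ in Lemma~\ref{lem:cheby-second} (so that $q=(1-p/p(0))/x$ indeed lands in $\Polys[t-1]$) and tracking the identity $16(2t-1)^2=64(t-\tfrac12)^2$ through both regimes; the Gram--Schmidt construction of $v$'s coordinates and the degenerate cases $v-v_1u=0$ (so $L=0$, $\nu=0$, and the Krylov subspace lies in $\ker M$) and $v_1=0$ ($L=\infty$, $\nu=\tfrac14$, and the Krylov subspace lies entirely in $u^\perp$) are routine.
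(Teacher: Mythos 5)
Your proposal is correct and follows essentially the same route as the paper's proof: place the extremal nodes of Lemma~\ref{lem:cheby-second} (with $n=t-1$, $\alpha$ equal to the target value, $\beta=1$) as the nonzero spectrum of $M$, align the squared coordinates of $v$ with the optimal weights $\pi_k$, and invoke the discrete minimax identity. The only cosmetic difference is that you verify the sufficient condition $\nu\le L^2/(L+2(2t-1))^2$ directly via the two-case split, whereas the paper argues by contradiction and packages the same computation as the bound $h(x)\ge\tfrac14\min\{1,x\}$ for $h(x)=x/(1+\sqrt{x})^2$.
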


\begin{proof}
	We take $\opnorm{M}=1$ without loss of generality; results for arbitrary 
	norms of $M$ follow by scaling the construction below. Define
	\begin{equation}\label{eq:eigenvec-lb-err-def}
	\err \defeq 
	\min\left\{\frac{1}{4}, 
	\frac{1}{64(t-\half)^2}
	\log^2\left(-3+4\frac{\norm{v}^2}{(u^T v)^2}\right)\right\}.
	\end{equation}
	We apply Lemma~\ref{lem:cheby-second} with $n=t-1$, $\alpha = \err$ 
	and $\beta=1$, to obtain $\xi_1, \ldots, \xi_t \in [\err, 1]$ and 
	probability 
	distribution $\pi_1, \ldots, \pi_t$ such that
	\begin{equation}\label{eq:cheby-second-lb-applied}
	\min_{p \in \Polys[t-1]} \sum_{k=1}^{t} \pi_k (\xi_k - \err)(1-\xi_k 
	p(\xi_k))^2 
	\ge  \frac{4\err}{e^{2(2t-1)/(\frac{1}{\sqrt{\err}}-1)}-1}.
	\end{equation}
	
	We assume without loss of generality that $d=t+1$ (otherwise we 
	zero-pad), and construct $M$ as follows. First, we take the eigenvalues 
	of 
	$M$ 
	to be $0, \xi_1, \ldots, \xi_t$, satisfying $0\preceq M \preceq I$. Next, 
	we 
	let $u$ be the eigenvector of $M$ 
	corresponding to eigenvalue $0$, satisfying $Mu=0$. Finally, for 
	$i=1,\ldots, t$ we choose the 
	eigenvector $u_i$ corresponding to eigenvalue $\xi_i$ such that $(u_i^T 
	v)^2 = \pi_i (\norm{v}^2 - (u^T v)^2)$. 
	
	Assume by contradiction
	\begin{equation}\label{eq:eigenvec-lb-contra-assumption}
	\min_{z\in\Krylov[t][M,v]} \frac{ z^T M z
	}{\norm{z}^2 } < \err,
	\end{equation}
	and let $q\in \Polys[t]$ be be such that
	\begin{equation*}
	\frac{ \sum_{i=1}^t \xi_i q^2(\xi_i) (u_i^T v)^2}
	{ q^2(0)(u^T v)^2 + \sum_{i=1}^t q^2(\xi_i) (u_i^T v)^2}
	= \frac{ (q(M)v)^T M q(M)v 
	}{\norm{q(M)v}^2 }
	= \min_{z\in\Krylov[t][M,v]} \frac{ z^T M z
	}{\norm{z}^2 } < \err.
	\end{equation*}
	Rearranging, using $(u_i^T 
	v)^2 = \pi_i (\norm{v}^2 - (u^T v)^2)$, and letting $\tilde{q}(x) = 
	q(x)/q(0)$, 
	we have that
	\begin{equation*}
	\err  > \left( \frac{\norm{v}^2}{(u^T v)^2} - 1\right)
	\sum_{i=1}^{t} \pi_i (\xi_i - \err) \tilde{q}^2(\xi_i)
	\ge 
	\left( \frac{\norm{v}^2}{(u^T v)^2} - 1\right) \frac{4
		\err}{e^{2(2t-1)/(\frac{1}{\sqrt{\err}}-1)}-1}.
	\end{equation*}
	where in the last transition we used that $\tilde{q}(0)=1$ and therefore it 
	is 
	of the form $1-xp(x)$ for some $p\in\Polys[t-1]$, so the lower 
	bound~\eqref{eq:cheby-second-lb-applied} applies. Rearranging gives
	\begin{equation*}
	\err > h\left(\frac{1}{16(t-\half)^2}
	\log^2\left(-3+4\frac{\norm{v}^2}{(u^T v)^2}\right)\right)
	,~~
	h(x) = \frac{x}{(1+\sqrt{x})^2}.
	\end{equation*}
	Using $h(x) \ge \frac{1}{4}\min\{1, x\}$ and the 
	definition~\eqref{eq:eigenvec-lb-err-def} of $\err$, we see that the 
	above 
	bound 
	gives the contradiction  $\err > \err$ and therefore 
	assumption~\eqref{eq:eigenvec-lb-contra-assumption} must be false 
	and 
	we have the desired result $\min_{z\in\Krylov[t][M,v]} \frac{ z^T M z
	}{\norm{z}^2 } \ge \err$.
\end{proof}

\subsection{Proof of sublinear convergence lower bound}

\begin{customthm}{\ref{thm:lb}, part II}
	Let $\lmin, \lmax, R, \tau \in \R$ such that $\lmin\le\lmax$, $\tau \ge 
	1$ 
	and $R>0$. For 
	every $t \ge 1$ and every 
	$d>t$ there exists $A\in\R^{d\times d}$, $b\in\R^d$ and $\rho > 0$ 
	such that 
	\begin{itemize}
		\item all eigenvalues of $A$ are in $[\lmin, \lmax]$, 
		\item the solution $\scu = \argmin_{x\in\R^d}\fcu(x)$
		satisfies $\norm{\scu} = R$, 
		\item there exists unit eigenvector $u_{\min}$ such that $u_{\min}^T 
		A u_{\min} = \lmin$ and $\frac{\norm{b}}{|u_{\min}^T b|} = \tau$, and
	\end{itemize}
	\begin{equation*}
	\fcu(s) - \fcu(\scu) > 
	\min\left\{
	\lmax^{-}-\lmin~,~
	\frac{\lmax-\lmin}{16(t-\half)^2}
	\log^2\left(\frac{\norm{b}^2}{( u_{\min}^T b)^2}\right)
	\right\}\frac{\norm{\scu}^2}{32},
	\end{equation*}
	where $\lmax^{-}=\min\{\lmax,0\}$, and
	\begin{equation*}
	\fcu(s) - \fcu(\scu) > 
	\frac{(\lmax - \lmin)\norm{\scu}^2}{16(t+\half)^2}.
	\end{equation*}
	for every $s\in\Krylov$.
\end{customthm}

\renewcommand{\err}[1][t]{\epsilon_{#1}}

\begin{proof}
	We begin with the first, ``non-convex'' bound, which is essentially a 
	reduction to the eigenvector problem. Here we assume $\lmin \le 0$ as 
	otherwise the lower bound is vacuous. We use 
	Lemma~\ref{lem:eigenvec-lb} to construct $M\in\R^{d\times d}$ 
	and unit vectors $u_{\min}, v \in R^d$ such that $M\succeq 0$, 
	$\opnorm{M}=\lmax - \lmin$, $M u_{\min} = 0$, $\norm{v}/|u_{\min}^T 
	v| 
	= \tau$ and for every $z\in \Krylov[t][M,v]$
	\begin{align}\label{eq:cr-sub-lb-err-def}
	\frac{z^T M z}{\norm{z}^2} & \ge \frac{\lmax - \lmin}{4}
	\min\left\{1 \,,\, 
	\frac{1}{16(t-\half)^2}
	\log^2\left(-3+4\frac{\norm{v}^2}{( u_{\min}^T v)^2}\right)\right\} 
	\nonumber \\ &
	\ge 
	\frac{1}{4}\min\left\{\lmax^{-} -\lmin \,,\, 
	\frac{\lmax - \lmin}{16(t-\half)^2}
	\log^2\left(\frac{\norm{v}^2}{( u_{\min}^T v)^2}\right)\right\} 
	\defeq
	\err,
	\end{align}
	where $\lmax^{-}=\min\{\lmax,0\}$.
	We let $\eps > 0$ be a parameter to be specified later. We let
	\begin{equation*}
	\ltr = -\lmin + \eps
	\end{equation*}
	and construct the cubic regularization instance as follows
	\begin{equation*}
	A = M + \lmin I
	~,~
	b = \frac{R}{\norm{\Atr^{-1}v}}v
	~,~
	\rho = \ltr / R.
	\end{equation*}
	The solution for this instance is unique and satisfies $\scu = 
	-\Atr^{-1}b = 
	- R \Atr^{-1}v / \norm{\Atr^{-1}v}$ so that $\norm{\scu} = R$, and 
	moreover we note that $\norm{b} \to 0$ as $\eps\to 0$. For every 
	$s\in\Krylov[t][M,v]=\Krylov[t][A,b]$,
	\begin{equation*}
	\fcu(s) = \half s^T A s + b^T s + \frac{\rho}{3}\norm{s}^3
	\ge -\norm{b}\norm{s} + \half(\lmin + \err)\norm{s}^2 + 
	\frac{\rho}{3}\norm{s}^3.
	\end{equation*}
	The RHS above is minimal for
	\begin{equation*}
	\norm{s} = \tilde{R} \defeq -\frac{\lmin + \err}{2\rho} +  
	\sqrt{\left( \frac{\lmin + \err}{2\rho} \right)^2 + \frac{\norm{b}}{\rho}}
	\le \frac{-\lmin - \err}{\rho}+ 
	\sqrt{\frac{\norm{b}}{\rho}},
	\end{equation*}
	where the bound holds since our definition of $\err$ implies $\err \le 
	-\lmin$ and so $-\lmin-\err \ge 0$.
	The minimum value of the RHS satisfies
	\begin{equation}\label{eq:cr-sub-lb-kryl-lb}
	\fcu(s) \ge -\frac{2}{3}\norm{b}\tilde{R} - \frac{1}{6}(-\lmin - \err) 
	\tilde{R}^2.
	\end{equation}
	Taking without loss of generality $u_{\min}^T b \le 0$ and using $\rho 
	= 
	\ltr / R$, and $\ltr = -\lmin + \eps$, we have
	\begin{equation}\label{eq:cr-sub-lb-opt-lb}
	\fcu(\scu) \le \fcu(R \cdot u_{\min}) \le \half \lmin R^2 + \frac{1}{3}\ltr 
	R^2 
	= \frac{1}{6} \lmin R^2 + \frac{\eps}{3} R^2.
	\end{equation}
	Recall that $\norm{b}\to 
	0$ 
	as $\eps\to0$, and take $\eps>0$ 
	sufficiently small so that
	\begin{equation*}
	\eps < \err/24
	~~
	\mbox{and}
	~~
	\norm{b} \le \min\{\err R/24 , \err^2/\rho \},
	\end{equation*}
	which implies also
	\begin{equation*}
	\tilde{R} \le \frac{-\lmin - \err}{\rho}
	+ \frac{\err}{\rho} = \frac{-\lmin}{\rho} \le \frac{\ltr}{\rho} = R.
	\end{equation*}
	Using $\tilde{R}\le R$, we may replace $\tilde R$ with $R$ in the 
	bound~\eqref{eq:cr-sub-lb-kryl-lb}, and combining this  
	with~\eqref{eq:cr-sub-lb-opt-lb} and the bounds on $\norm{b}$ and 
	$\eps$ we obtain
	\begin{equation*}
	\fcu(s)-\fcu(\scu) \ge 
	\frac{\err}{6}R^2 - \frac{2\norm{b}}{3}R - 
	\frac{\eps}{3}R^2 \ge \frac{\err}{8} R^2.
	\end{equation*}
	Recalling $\norm{\scu} = R$ and the 
	definition~\eqref{eq:cr-sub-lb-err-def} 
	of $\err$, we get the desired ``non-convex'' lower bound.

	To derive the alternative, ``convex'' lower bound, 
	we again let 
	$0<\eps<\lmax-\lmin$ be a parameter to be determined, and we apply 
	Lemma~\ref{lem:cheby-second} with $n=t$, $\alpha=\eps$, 
	$\beta=\lmax-\lmin$ to obtain points $\xi_0, \ldots, \xi_t \in [0, 
	\lmax-\lmin]$ and probability masses $\pi_0, \ldots, \pi_t$ such that
	\begin{equation*}
	\min_{p \in \Polys} \sum_{k=0}^{n} \pi_k (\xi_k - \eps)(1-\xi_k 
	p(\xi_k))^2 = \left[\minmaxU[t][\frac{\lmax-\lmin}{\eps}] \right]^2.
	\end{equation*}
	To construct the hard instance we again set
	\begin{equation*}
	\ltr = -\lmin + \eps.
	\end{equation*}
	Letting $\xi$ and $\sqrt{\pi}$ denote vectors with entries $\xi_i$ and 
	$\sqrt{\pi}_i$, we set
	\begin{equation*}
	A = \diag(\xi - \ltr)
	~,~
	b = R\cdot\Atr\sqrt{\pi}
	~,~
	\rho = \ltr / R.
	\end{equation*}
	Again we have that $\scu = -\Atr^{-1}b$ is the unique solution and 
	$\norm{\scu} = R\norm{\sqrt{\pi}}=R$. Let $s\in\Krylov$, then 
	\begin{equation*}
	s = -p(\Atr)b = p(\Atr)\Atr \scu = -R p(\Atr)\Atr \sqrt{\pi}
	\end{equation*}
	for some $p\in\Polys[t]$. By equality~\eqref{eq:fx-s-expression} we 
	have
	\begin{align*}
	\fcu(s) - \fcu(\scu) & \ge \frac{1}{2}\norm{\Atr^{1/2}(s-\scu)}^2
	=
	\frac{R^2}{2} 
	\sum_{k=0}^t \pi_k \xi_k (1-\xi_k p(\xi_k) )^2 \\ &
	\ge \frac{R^2}{2} \sum_{k=0}^t \pi_k (\xi_k -\eps) (1-\xi_k p(\xi_k) )^2
	= \frac{R^2}{2}  \left[\minmaxU[t][\frac{\lmax-\lmin}{\eps}] \right]^2.
	\end{align*}
	Note that
	\begin{equation*}
	\lim_{\eps\to 0} \minmaxU[t][\frac{\lmax-\lmin}{\eps}]
	=
	\frac{\sqrt{\lmax-\lmin}}{2t+1}.
	\end{equation*}
	Therefore, we can choose $\eps$ sufficiently small so that
	\begin{equation*}
	\left[\minmaxU[t][\frac{\lmax-\lmin}{\eps}] \right]^2 
	\ge \frac{\lmax-\lmin}{2(2t+1)^2},
	\end{equation*}
	which gives the proof for the ``convex'' lower bound, as 
	$\norm{\scu}=R$.
\end{proof}

\section{Numerical experiment details}\label{sec:exp-details}

\paragraph{Random problem generation, $\kappa < \infty$}
We generate random cubic 
regularization instances $(A, b, \rho)$ as 
follows. We take $\lmax = 1$ and draw $\lmin\sim U[-1,-0.1]$, where 
$U[a,b]$ denotes the uniform distribution on $[a,b]$. We then fix two 
eigenvalues of $A$ to be $\lmin,\lmax$ and draw the other $d-2$ 
eigenvalues independently from $U[\lmin, \lmax]$. We then take $A$ to be 
diagonal with said eigenvalues. This is without much loss of generality (as 
the Krylov subspace method is rotationally invariant), and it allows us to 
quickly compute matrix-vector products, whose computation nevertheless 
accounts for much of the experiment running time when using 
$d=10^{6}$. 

For a desired condition number $\kappa$, we let
\begin{equation*}
\ltr \defeq \frac{\lmax - \kappa\lmin}{\kappa - 1}
\end{equation*} 
and as usual denote $\Atr = A + \ltr I$. 
To generate $b$, $\rho$, we draw a standard normal $d$-dimensional 
vector 
$v\sim \mc{N}(0; I)$ and let
\begin{equation*}
b =  \sqrt{\frac{2}{v^T \Atr^{-1} v + 
		\frac{\ltr}{3} v^T\Atr^{-2} v} }\cdot  v
~,~
\rho = \frac{\ltr}{\norms{\Atr^{-1}b}},
\end{equation*}
The above choice of $b$ and $\rho$ guarantees that $\rho 
\norm{\Atr^{-1}b} = \ltr$ and therefore $\scu = -\Atr^{-1}b$ is the unique 
solution and the problem condition number satisfies
\begin{equation*}
\frac{\lmax+\rs}{\lmin+\rs} = \frac{\lmax + \ltr}{\lmin + \ltr} = \kappa
\end{equation*}
as desired. Moreover, our scaling of $b$ guarantees that
\begin{equation*}
\fcu(0)-\fcu(\scu) = \half (\scu)^T \Atr \scu + \frac{\rho}{6}\norm{\scu}^3
=
\half \left(b^T \Atr^{-1} b + \frac{\ltr}{3}b^T \Atr^{-2} b\right) =1.
\end{equation*}

Our technique for generating $(A,b,\rho)$ is similar to the one we used 
in~\cite{CarmonDu16} to test gradient descent for cubic regularization. 
The main difference is that in~\cite{CarmonDu16} the value of $\rho$ is 
fixed 
and consequently there is no control over the initial optimality gap. 

For every value of $\kappa$, we generate 5,000 problem instances 
independently as described above.

\paragraph{Random problem generation, $\kappa = \infty$} We let 
$A=\diag(\lambda)$ where $\lambda_1 = \lmin = -0.5$, $\lambda_d = 
\lmax = 0.5$ and $\lambda_2, \ldots, \lambda_{d-1}$ are drawn i.i.d. from 
$U[\lmin + \gamma, \lmax]$ where we take the eigen-gap 
$\gamma=10^{-4}$ and $d=10^6$. 

As $\kappa=\infty$, we let
\begin{equation*}
\ltr = -\lmin
\end{equation*}
and denote $\hat{A}_{\ltr} \defeq \diag(\lambda_2+\ltr, \ldots, 
\lmax+\ltr)$. 
We generate $b$ and $\rho$ by drawing a standard normal 
$(d-1)$-dimensional vector $v$, and letting
\begin{equation*}
b_1 = 0
~,~
b_{2:d} = \sqrt{\frac{2}{v^T \hat{A}_{\ltr}^{-1} v + 
		(1+\tau^2)\frac{\ltr}{3} v^T\hat{A}_{\ltr}^{-2} v} } v
~,~
\rho = \frac{\ltr}{\norms{\hat{A}_{\ltr}^{-1}b_{2:d}}\sqrt{1+\tau^2}},
\end{equation*}
where $\tau$ is a parameter that determines the weight of the eigenvector 
corresponding to $\lmin$ in the solution (when $\tau = \infty$ we have a 
pure eigenvector instance); we take $\tau=10$. A global minimizer $\scu$ 
of the problem instance $(A,b,\rho)$ generated above has the form,
\begin{equation*}
[\scu]_1 = \pm \tau \norms{\hat{A}_{\ltr}^{-1}b_{2:d}}
~,~
[\scu]_{2:d} = -\hat{A}_{\ltr}^{-1}b_{2:d}.
\end{equation*}
As in the case $\kappa < \infty$, it is easy to verify that the scaling of $b$ 
guarantees 
$\fcu(0)-\fcu(\scu) = 1$.

When $\kappa=\infty$, the choice of eigen-gap $\gamma$ strongly affects 
optimization performance. We explore this in Figure~\ref{fig:exp-gap}, 
which 
repeats the 
experiment described above with different values of $\gamma$ (and 
$d=10^5$). As 
seen in the figure, the non-randomized Krylov subspace solution becomes 
more suboptimal as $\gamma$ increases. Moreover, randomization 
``kicks-in'' after roughly $\log d / \sqrt{\gamma}$ iterations, when  
eigen-gap-dependent linear convergence begins.

To create each plot, we draw 10 independent problem instances from the 
distribution described above, and for each problem instance run each 
randomization approach with 50 different random seeds; we observe that 
sampling problem instances and sampling randomization seeds contribute 
similar amount of variation to the final ensemble of results.

\begin{figure}
	\centering
	\includegraphics[width=\nips{\columnwidth}\arxiv{0.975\columnwidth}]
	{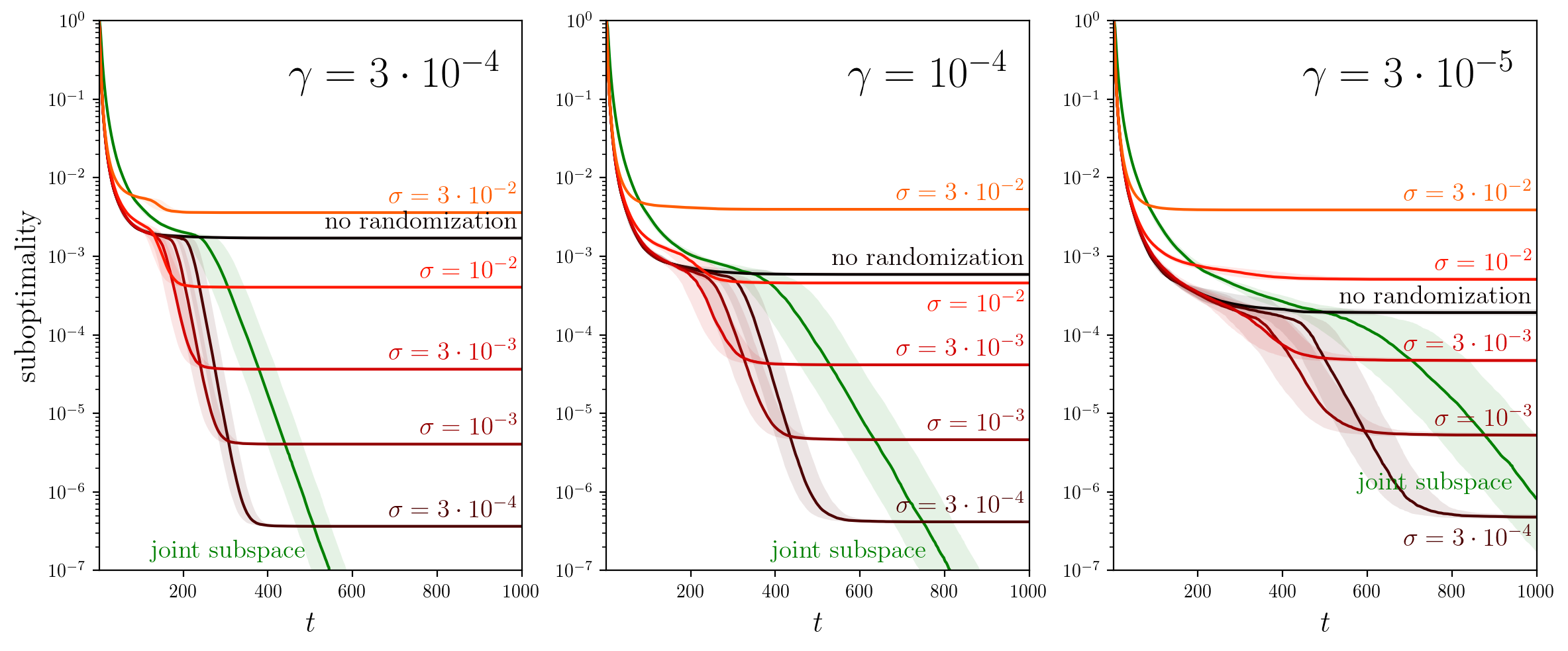}
	\vspace{-0.5cm}
	\caption{\label{fig:exp-gap}Optimality gap of 
		Krylov subspace solutions on random 
		cubic-regularization problems, versus subspace 
		dimension $t$. Each plot shows result for problem instances with a 
		different eigen-gap $\gamma = (\lmax - \lmin)/(\lambda_2 - 
		\lmin)$, where $\lambda_2$ is the smallest eigenvalue larger than 
		$\lmin$. Each line represents median suboptimality, and shaded 
		regions represent inter-quartile range. Different lines 
		correspond to different randomization settings.
	}
\end{figure}

\paragraph{Hardness of generated problems}
It is well known that the performance of subspace methods improves 
dramatically when the eigenvalues of $A$ are 
clustered~\cite{TrefethenBa97}. Taking the 
eigenvalues of $A$ to be uniformly distributed produces very little 
clustering, making the instances we draw somewhat hard. However, 
examining the proof of the lower bound~\eqref{eq:cr-lb-lin} we see that 
the worst case eigenvalues are of the form $\lambda_k = \lmin + 
(\lmax-\lmin)\sin^2 \theta_k$ where $\theta_1, \ldots \theta_d$ are 
equally spaced in $[0, \pi/2]$. This is fairly different from a uniform 
distribution (asymptotically as $d\to\infty$ it becomes an arcsine 
distribution), and consequently we think that uniformly distributing the 
eigenvalues makes for a challenging  but not quite adversarial test case.

\paragraph{Computing Krylov subspace solutions}
We use the Lanczos process to obtain a tridiagonal representation of $A$ as 
described in Section~\ref{sec:lanczos}. To obtain full optimization traces 
we solve equation~\eqref{eq:cr-lambda-search} after every Lanczos 
iteration, warm-starting $\lambda$ with the solution from the previous 
step and the minimum eigenvalue of the current tridiagonal matrix. 
We use the Newton method described by~\citet[Algorithm 
6.1]{CartisGoTo11} to solve the equation~\eqref{eq:cr-lambda-search} in 
the Krylov subspace. For the $\kappa < \infty$ experiment, we stop 
the process when
$|\norm{A_{\lambda}^{-1}b}-\lambda/\rho| < 10^{-12}$ or after 25 
tridiagonal system solves are computed. For the $\kappa = \infty$ 
experiment we allow up to 100 system solves.

\end{document}